\shorttitle{On the asymptotics of constrained exponential random graphs} 
\newcommand{\PR}{\mathbb P}
\newcommand{\G}{\mathcal{G}}
\newcommand{\eps}{\epsilon}
\newcommand{\mytilde}{\raise.17ex\hbox{$\scriptstyle\mathtt{\sim}$}}
\numberwithin{equation}{section}  
\begin{document}

\title{On the asymptotics of constrained exponential random graphs} 

\authorone[Brown University]{Richard Kenyon}

\addressone{Department of Mathematics, Brown University, Providence, RI 02912, USA}
\emailone{rkenyon@math.brown.edu}

\authortwo[University of Denver]{Mei Yin} 

\addresstwo{Department of Mathematics, University of Denver, Denver, CO 80208, USA}
\emailtwo{mei.yin@du.edu} 

\begin{abstract}
The unconstrained exponential family of random graphs assumes no
prior knowledge of the graph before sampling, but it is natural to consider
situations where partial information about the graph is known, for example the total number of edges.
What does a typical
random graph look like, if drawn from an exponential model subject to such
constraints? Will there be a similar phase
transition phenomenon (as one varies the parameters) as that which occurs in the unconstrained
exponential model? We present some general results for this
constrained model and then apply them to get concrete answers in
the edge-triangle model with fixed density of edges.
\end{abstract}

\keywords{constrained exponential random graphs; phase transitions} 

\ams{05C80}{82B26} 

\section{Introduction}
\label{1} Consider the set $\mathcal{G}_n$ of all simple graphs
$G_n$ on $n$ vertices (``simple'' means undirected, with no loops
or multiple edges). By a $k$-parameter family of exponential
random graphs we mean a family of probability measures
$\PR_n^{\beta}$ on $\G_n$ defined by, for $G_n\in\G_n$,
\begin{equation}
\label{pmf} \PR_n^{\beta}(G_n)=\exp\left[n^2\left(\beta_1
t(H_1,G_n)+\cdots+
  \beta_k t(H_k,G_n)-\psi_n^{\beta}\right)\right],
\end{equation}
where $\beta=(\beta_1,\dots,\beta_k)$ are $k$ real parameters,
$H_1,\dots,H_k$ are pre-chosen finite simple graphs (and we take
$H_1$ to be a single edge), $t(H_i, G_n)$ is the density of graph
homomorphisms (the probability that a random vertex map $V(H_i)
\to V(G_n)$ is edge-preserving),
\begin{equation}
\label{t} t(H_i, G_n)=\frac{|\text{hom}(H_i,
G_n)|}{|V(G_n)|^{|V(H_i)|}},
\end{equation}
and $\psi_n^{\beta}$ is the normalization constant,
\begin{equation}
\label{psi} \psi_n^{\beta}=\frac{1}{n^2}\log\sum_{G_n \in
\mathcal{G}_n} \exp\left[n^2 \left(\beta_1
t(H_1,G_n)+\cdots+\beta_k t(H_k,G_n)\right) \right].
\end{equation}
Sometimes, other than homomorphism densities, we also consider
more general bounded continuous functions on the graph space (a
notion to be made precise later), for example the degree sequence
or the eigenvalues of the adjacency matrix.

Exponential random graphs have been used to model real-world
networks as they are able to capture a wide variety of common
\emph{network tendencies} by representing a complex global
structure through a set of tractable local features \cite{FS}
\cite{HJ} \cite{N} \cite{Hofstad} \cite{WF}. Intuitively, we can
think of the $k$ parameters $\beta_1,\ldots,\beta_k$ as tuning
parameters that allow one to adjust the influence of different
subgraphs $H_1,\ldots,H_k$ of $G_n$ on the probability
distribution, whose asymptotics are our main interest since
networks are often very large in size. As flexible as they are,
exponential models admittedly have one shortcoming: they are
centered on dense graphs whereas most network data in the real
world are sparse. In this sense, one could argue that exponential
random graphs (and the graphon technology developed by Lov\'{a}sz
et al. \cite{BCLSV1} \cite{BCLSV2} \cite{BCLSV3} \cite{L}
\cite{LS} that is heavily used in studying them) are of limited
relevance in studying real networks. However, from the point of
view of extremal combinatorics and statistical mechanics,
exponential random graphs and constrained graphons represent an
important and challenging class of models, displaying both diverse
and novel phase transition behavior \cite{RRS} \cite{RS1}
\cite{RS} \cite{RY}.

Our main results are (Theorem \ref{main1}) a variational principle
for the normalization constant (partition function) for graphons
with constrained edge density, and an associated concentration of
measure (Theorem \ref{main2}) indicating that almost all large
constrained graphs lie near the maximizing set. We then specialize
to the edge-triangle model, and show the existence of
first-order phase transitions in the edge-density constrained
models.

\section{Background}

We begin by reviewing some notation and results concerning the
theory of graph limits and its use in exponential random graph
models. Following the earlier work of Aldous \cite{Aldous} and
Hoover \cite{Hoover}, Lov\'{a}sz and coauthors (V.T. S\'{o}s, B.
Szegedy, C. Borgs, J. Chayes, K. Vesztergombi,...) have
constructed an elegant theory of graph limits in a sequence of
papers \cite{BCLSV1} \cite{BCLSV2} \cite{BCLSV3} \cite{LS}. See
also the recent book \cite{L} for a comprehensive account and
references. This sheds light on various topics such as graph
testing and extremal graph theory, and has found applications in
statistics and related areas (see for instance \cite{CDS}). Though
their theory has been developed for dense graphs (number of edges
comparable to the square of number of vertices), serious attempts
have been made at formulating parallel results for sparse graphs
\cite{Bo} \cite{BCCZ}.

Here are the basics of this beautiful theory. Any simple graph
$G_n$, irrespective of the number of vertices, may be represented
as an element $h^{G_n}$ of a single abstract space $\mathcal{W}$
that consists of all symmetric measurable functions from $[0,1]^2$
into $[0,1]$, by defining
\begin{equation}
h^{G_n}(x,y)=\left\{%
\begin{array}{ll}
    1, & \hbox{if $(\lceil nx \rceil, \lceil ny \rceil)$ is an edge in $G_n$;} \\
    0, & \hbox{otherwise.} \\
\end{array}%
\right.
\end{equation}
A sequence of graphs $\{G_n\}_{n\geq 1}$ is said to converge to a
function $h \in \mathcal{W}$ (referred to as a ``graph limit'' or
``graphon'') if for every finite simple graph $H$ with vertex set
$V(H)=[k]=\{1,...,k\}$ and edge set $E(H)$,
\begin{equation}
\lim_{n \to \infty} t(H, h^{G_n})=t(H, h),
\end{equation}
where $t(H, h^{G_n})=t(H, G_n)$, the graph homomorphism density
(\ref{t}), by construction, and
\begin{equation}
\label{tt} t(H, h)=\int_{[0,1]^k}\prod_{\{i,j\}\in E(H)}h(x_i,
x_j)dx_1\cdots dx_k.
\end{equation}
Indeed every function in $\mathcal{W}$ is the limit of a certain
convergent graph sequence \cite{LS}. Intuitively, the interval
$[0,1]$ represents a ``continuum'' of vertices, and $h(x,y)$
denotes the probability of putting an edge between $x$ and $y$.
For example, for the Erd\H{o}s-R\'{e}nyi random graph $G(n,
\rho)$, the ``graphon'' is represented by the function that is
identically equal to $\rho$ on $[0,1]^2$. This ``graphon''
interpretation enables us to capture the notion of convergence in
terms of subgraph densities by an explicit metric on
$\mathcal{W}$, the so-called ``cut distance'':
\begin{equation}
d_{\square}(f, h)=\sup_{S, T \subseteq [0,1]}\left|\int_{S\times
T}\left(f(x, y)-h(x, y)\right)dx\,dy\right|
\end{equation}
for $f, h \in \mathcal{W}$. A non-trivial complication is that the
topology induced by the cut metric is well defined only up to
measure preserving transformations of $[0,1]$ (and up to sets of
Lebesgue measure zero), which in the context of finite graphs may
be thought of as vertex relabeling. To tackle this issue, an
equivalence relation $\sim$ is introduced in $\mathcal{W}$. We say
that $f\sim h$ if $f(x, y)=h_{\sigma}(x, y):=h(\sigma x, \sigma
y)$ for some measure preserving bijection $\sigma$ of $[0,1]$. Let
$\tilde{h}$ (referred to as a ``reduced graphon'') denote the
equivalence class of $h$ in $(\mathcal{W}, d_{\square})$. Since
$d_{\square}$ is invariant under $\sigma$, one can then define on
the resulting quotient space $\tilde{\mathcal{W}}$ the natural
distance $\delta_{\square}$ by $\delta_{\square}(\tilde{f},
\tilde{h})=\inf_{\sigma_1, \sigma_2}d_{\square}(f_{\sigma_1},
h_{\sigma_2})$, where the infimum ranges over all measure
preserving bijections $\sigma_1$ and $\sigma_2$, making
$(\tilde{\mathcal{W}}, \delta_{\square})$ into a metric space.
With some abuse of notation we also refer to $\delta_{\square}$ as
the ``cut distance''. The space $(\tilde{\mathcal{W}}, \delta_{\square})$ enjoys many
important properties that are essential for the study of
exponential random graph models. For example, it is a compact
space and homomorphism densities $t(H, \cdot)$ are continuous
functions on it.

For the purpose of this paper, two theorems from Chatterjee and
Diaconis \cite{CD} (both based on a large deviation result
established in Chatterjee and Varadhan \cite{CV}) merit some
special attention. Together they connect the occurrence of a phase
transition in the exponential model with the solution of a certain
maximization problem. Their results are formulated in terms of
general exponential models where the terms in the exponent
defining the probability measure may contain functions on the
graph space other than homomorphism densities, as alluded to at
the beginning of this paper. Let $T: \tilde{\mathcal{W}}
\rightarrow \mathbb{R}$ be a bounded continuous function. Let the
probability measure $\PR_n$ and the normalization constant
$\psi_n$ be defined as in (\ref{pmf}) and (\ref{psi}), that is,
\begin{equation}
\label{pmf2}
\PR_n(G_n)=\exp\left(n^2(T(\tilde{h}^{G_n})-\psi_n)\right),
\end{equation}
\begin{equation}
\label{psi2} \psi_n=\frac{1}{n^2}\log\sum_{G_n \in \mathcal{G}_n}
\exp\left(n^2 T(\tilde{h}^{G_n}) \right).
\end{equation}
The first theorem (Theorem 3.1 in \cite{CD}) states that the
limiting normalization constant $\psi:=\lim_{n \rightarrow \infty}
\psi_n$ of the exponential random graph, which is crucial for the
computation of maximum likelihood estimates, always exists and is
given by
\begin{equation}
\label{setmax} \psi=\sup_{\tilde{h}\in
\tilde{W}}\left(T(\tilde{h})-I(\tilde{h})\right),
\end{equation}
where $I$ is first defined as a function from $[0, 1]$ to
$\mathbb{R}$ as
\begin{equation}
\label{I1} I(u)=\frac{1}{2}u\log u+\frac{1}{2}(1-u)\log(1-u),
\end{equation}
and then extended to $\tilde{\mathcal{W}}$ in the usual manner:
\begin{equation}
\label{I2} I(\tilde{h})=\int_{[0, 1]^2}I(h(x, y))\,dx\,dy,
\end{equation}
where $h$ is any representative element of the equivalence class
$\tilde{h}$. It was shown in \cite{CV} that $I$ is well defined
and lower semi-continuous on $\tilde{\mathcal{W}}$. Let
$\tilde{H}$ be the subset of $\tilde{\mathcal{W}}$ where $\psi$ is
maximized. By the compactness of $\tilde{\mathcal{W}}$, the
continuity of $T$ and the lower semi-continuity of $I$,
$\tilde{H}$ is a nonempty compact set. The set $\tilde{H}$ encodes
important information about the exponential model (\ref{pmf2}) and
helps to predict the behavior of a typical random graph sampled
from this model. The second theorem (Theorem 3.2 in \cite{CD})
states that in the large $n$ limit, the quotient image
$\tilde{h}^{G_n}$ of a random graph $G_n$ drawn from (\ref{pmf2})
must lie close to $\tilde{H}$ with high probability,
\begin{equation}
\delta_{\square}(\tilde{h}^{G_n}, \tilde{H})\to 0\hbox{ in
probability as } n\to \infty.
\end{equation}
Since the limiting normalization constant $\psi$ is the generating
function for the limiting expectations of other random variables
on the graph space such as expectations and correlations of
homomorphism densities, a phase transition occurs when $\psi$ is
non-analytic or when $\tilde{H}$ is not a singleton set.

\section{Constrained exponential random graphs}
\label{2} The exponential family of random graphs introduced above
have popular counterparts in statistical physics: a hierarchy of
models ranging from the grand canonical ensemble, the canonical
ensemble, to the microcanonical ensemble, with subgraph densities
in place of particle and energy densities, and tuning parameters
in place of temperature and chemical potentials. In the grand
canonical ensemble, the exponential model (\ref{pmf}) in this
case, no prior knowledge of the graph is assumed. As useful as
they are, for large networks these models are sometimes
inappropriate. For example, as shown by Chatterjee and Diaconis
\cite{CD}, when $k=2$ and $\beta_2>0$, all graphs drawn from
(\ref{pmf}) where $H_1$ is an edge and $H_2$ is any finite simple
graph are not appreciably different from Erd\H{o}s-R\'{e}nyi in
the large $n$ limit. This somewhat trivial conclusion implies that
sometimes subgraph densities cannot be tuned and exponential
random graphs alone may not capture all desirable features of the
networked system, such as interdependency and clustering. We are
thus motivated to study variants of the exponential random graph
model: the canonical ensemble, where some subgraph density is
controlled directly and others are tuned with parameters, and the
microcanonical ensemble, where complete information of the graph
is observed beforehand.

One difficulty arises. Unlike standard statistical physics models,
the equivalence of various ensembles in the asymptotic regime does
not hold in these models (see \cite{TET} for discussions about
non-equivalence of ensembles due to non-concavity of entropy). A
natural question to ask is what would be a typical random graph
drawn from an exponential model subject to certain constraints? Or
perhaps more importantly will there be a similar phase transition
phenomenon as in the standard exponential model (hereby referred
to as an ``unconstrained model'')? The following Theorems
\ref{main1} and \ref{main2} give a detailed answer to these
questions. Not surprisingly, the proofs follow a similar line of
reasoning as in Theorems 3.1 and 3.2 of \cite{CD}. However, there
are noted differences in how we interpret these phase transition
results. For example, a typical graph drawn from the constrained
edge-triangle model still exhibits Erd\H{o}s-R\'{e}nyi structure
for $\beta_2$ close to $0$, but consists of one big clique and
some isolated vertices as $\beta_2$ gets sufficiently close to
infinity, so the transition is between graphs of different
characters. In the unconstrained model, on the other hand,
although there is a curve in the parameter space across which the
graph densities display sudden jumps \cite{CD} \cite{RY}, the
transition is between graphs of similar characters
(Erd\H{o}s-R\'{e}nyi graphs). This gives one more reason why the
constrained model deserves its own attention. Due to the imposed
constraints, instead of working with probability measure $\PR_n$
and normalization constant $\psi_n$ as in \cite{CD}, we are
working with conditional probability measure and conditional
normalization constant, so the argument is more involved. The
proof of Theorem \ref{main1} also incorporates some ideas from
Theorem 3.1 of \cite{RS1}.

For clarity, we assume that the edge density of the graph is
approximately known, though the proof runs through without much modification if the density of some
other more complicated subgraph is approximately described. We make precise the
notion of ``approximately'' below. We still assign a
probability measure $\PR_n$ as in (\ref{pmf2}) on $\G_n$, but we
will consider a conditional normalization constant and also define
a conditional probability measure. Let $e\in[0,1]$ be a real
parameter that signifies an ``ideal'' edge density. Take
$\alpha>0$. The conditional normalization constant
$\psi^{e}_{n,\alpha}$ is defined analogously to the normalization
constant for the unconstrained exponential random graph model,
\begin{equation}
\label{cpsi1} \psi^{e}_{n,\alpha}=\frac{1}{n^2}\log\sum_{G_n\in
\mathcal{G}_n: |e(G_n)-e|<\alpha}\exp\left(n^2
T(\tilde{h}^{G_n})\right),
\end{equation}
the difference being that we are only taking into account graphs
$G_n$ whose edge density $e(G_n)$ is within an $\alpha$
neighborhood of $e$. Correspondingly, the associated conditional
probability measure $\PR^{e}_{n,\alpha}(G_n)$ is given by
\begin{equation}
\label{cpmf} \PR^{e}_{n,\alpha}(G_n)=\exp(-n^2
\psi^{e}_{n,\alpha})\exp\left(n^2
T(\tilde{h}^{G_n})\right)\mathbbm{1}_{|e(G_n)-e|<\alpha}.
\end{equation}
We perform two limit operations on $\psi^{e}_{n,\alpha}$. First we
take $n$ to infinity, then we shrink the interval around $e$ by
letting $\alpha$ go to zero:
\begin{equation}
\label{cpsi2} \psi^{e}=\lim_{\alpha \rightarrow 0}\lim_{n
\rightarrow \infty}\psi^{e}_{n,\alpha}.
\end{equation}
Intuitively, these two operations ensure that we are examining the
asymptotics of exponentially weighted large graphs with edge
density sufficiently close to $e$. Theorem \ref{main1} shows that
this is indeed the case.

\begin{theorem}
\label{main1} Let $e: 0\leq e\leq 1$ be a real parameter and $T:
\tilde{\mathcal{W}} \rightarrow \mathbb{R}$ be a bounded
continuous function. Let $I$ and $\psi^{e}$ be defined as before
(see (\ref{I1}), (\ref{I2}), (\ref{cpsi1}) and (\ref{cpsi2})).
Then
\begin{equation}
\label{csetmax} \psi^{e}=\sup_{\tilde{h}\in \tilde{\mathcal{W}}:
e(\tilde{h})=e}\left(T(\tilde{h})-I(\tilde{h})\right),
\end{equation}
where
\begin{equation}
e(\tilde{h})=\int_{[0,1]^2}h(x_1, x_2)dx_1dx_2,
\end{equation}
and $h$ is any function in the equivalence class $\tilde{h}$.
\end{theorem}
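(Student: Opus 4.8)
The plan is to follow the strategy of Theorem 3.1 in \cite{CD}, but carried out at the level of the conditioned measure, using the compactness of $(\tilde{\mathcal{W}},\delta_\square)$ and the Chatterjee--Varadhan large deviation principle as the main engine. Write $\tilde{\mathcal{W}}_e^\alpha=\{\tilde h:|e(\tilde h)-e|<\alpha\}$ and $\tilde{\mathcal{W}}_e=\{\tilde h:e(\tilde h)=e\}$. The quantity $n^2\psi^e_{n,\alpha}$ is the log of a sum over graphs $G_n$ with $|e(G_n)-e|<\alpha$ of $\exp(n^2 T(\tilde h^{G_n}))$; since $T$ is bounded and continuous and $e(\cdot)$ is itself a continuous functional on $\tilde{\mathcal{W}}$ (the homomorphism density of a single edge), this is a continuous bounded functional integrated against the distribution of $\tilde h^{G_n}$ under the uniform (Erd\H{o}s--R\'enyi $G(n,1/2)$, up to the trivial $\exp(n^2\log 2)$ factor) measure. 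The Chatterjee--Varadhan LDP says this distribution satisfies a large deviation principle on $(\tilde{\mathcal{W}},\delta_\square)$ with rate function $I$ (suitably normalized). Applying Varadhan's lemma to the bounded continuous integrand $T$ restricted to the open, respectively closed, set $\tilde{\mathcal{W}}_e^\alpha$ and using the LDP upper/lower bounds for closed/open sets gives, after taking $n\to\infty$,
\begin{equation*}
\sup_{\tilde h:|e(\tilde h)-e|<\alpha}\bigl(T(\tilde h)-I(\tilde h)\bigr)\ \ge\ \limsup_n \psi^e_{n,\alpha},\qquad
\liminf_n \psi^e_{n,\alpha}\ \ge\ \sup_{\tilde h:|e(\tilde h)-e|\le\alpha}\bigl(T(\tilde h)-I(\tilde h)\bigr),
\end{equation*}
so that $\lim_n\psi^e_{n,\alpha}=\sup_{\tilde h\in\tilde{\mathcal{W}}_e^\alpha}(T(\tilde h)-I(\tilde h))$ up to the boundary issue, which is harmless in the next limit.

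Next I would take $\alpha\to 0$. The function $F(\tilde h):=T(\tilde h)-I(\tilde h)$ is upper semi-continuous on the compact space $\tilde{\mathcal{W}}$ (since $T$ is continuous and $I$ is lower semi-continuous), so $\alpha\mapsto\sup_{\tilde{\mathcal{W}}_e^\alpha}F$ is nonincreasing as $\alpha\downarrow 0$ and its limit is at least $\sup_{\tilde{\mathcal{W}}_e}F$. For the reverse inequality I would argue by compactness: choose $\tilde h_\alpha\in\overline{\tilde{\mathcal{W}}_e^\alpha}$ nearly attaining the supremum, extract a convergent subsequence $\tilde h_{\alpha_j}\to\tilde h_*$; since $e(\cdot)$ is continuous, $e(\tilde h_*)=e$, so $\tilde h_*\in\tilde{\mathcal{W}}_e$, and by upper semi-continuity of $F$,
\begin{equation*}
\lim_{\alpha\to 0}\sup_{\tilde{\mathcal{W}}_e^\alpha}F\ \le\ \limsup_j F(\tilde h_{\alpha_j})\ \le\ F(\tilde h_*)\ \le\ \sup_{\tilde{\mathcal{W}}_e}F,
\end{equation*}
which closes the loop and yields $\psi^e=\sup_{\tilde h:e(\tilde h)=e}(T(\tilde h)-I(\tilde h))$.

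The main obstacle is the interchange of the limit $n\to\infty$ with the constraint for \emph{fixed} $\alpha$: one must be careful that the constraint $|e(G_n)-e|<\alpha$ is an open condition in the cut topology only approximately (the functional $e(\cdot)$ is continuous, so it is genuinely open, but the discreteness of attainable edge densities on $n$ vertices must not spoil the lower bound), and that the upper and lower large-deviation bounds match after passing through Varadhan's lemma for a merely bounded continuous $T$ on a set that is not closed. I expect to handle this exactly as in Theorem 3.1 of \cite{RS1}: sandwich $\tilde{\mathcal{W}}_e^\alpha$ between a slightly smaller closed set and $\tilde{\mathcal{W}}_e^\alpha$ itself, apply the LDP upper bound on the closure and the LDP lower bound on the open set, and then let the sandwiching parameter shrink; the boundary $\{e(\tilde h)=e\pm\alpha\}$ contributes nothing because we ultimately send $\alpha\to 0$. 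The rest — continuity of $e(\cdot)$, compactness of $\tilde{\mathcal{W}}$, lower semi-continuity of $I$, boundedness of $T$ — is quoted directly from the background section.
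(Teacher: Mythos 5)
Your overall route is essentially the paper's: combine the Chatterjee--Varadhan LDP with a restricted Laplace/Varadhan-type argument to identify $\lim_{n\to\infty}\psi^e_{n,\alpha}$ as a supremum of $T-I$ over the edge-density strip, sandwiching between the open and closed strips, and then send $\alpha\to0$ using compactness of $\tilde{\mathcal{W}}$, continuity of $e(\cdot)$, and upper semi-continuity of $T-I$. The only structural difference is that the paper does not quote Varadhan's lemma: it proves the needed statement by hand, covering the range of the bounded function $T$ by finitely many $\epsilon$-intervals and applying the counting form of the LDP (upper bound on the closed sets $\tilde{H}_{\alpha,c}$, lower bound on the open sets $\tilde{U}_{\alpha,c}$), which is precisely the ``Varadhan on a subset'' device you invoke.

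One concrete correction: your displayed inequalities are stated backwards. The LDP gives upper bounds over closed sets and lower bounds over open sets, so what these tools deliver is $\limsup_n\psi^e_{n,\alpha}\le\sup_{\{|e(\tilde h)-e|\le\alpha\}}(T-I)$ and $\liminf_n\psi^e_{n,\alpha}\ge\sup_{\{|e(\tilde h)-e|<\alpha\}}(T-I)$, not the reverse; as written, the claim $\liminf_n\psi^e_{n,\alpha}\ge\sup_{\text{closed strip}}(T-I)$ is not obtainable this way and could fail if the maximizer of the upper semi-continuous function $T-I$ lies only on the boundary $e(\tilde h)=e\pm\alpha$. This slip does not derail your plan: your closing paragraph describes the correct repair, which is exactly the paper's chain $\sup_{\tilde{H}_{\alpha-\alpha^2}}(T-I)\le\sup_{\tilde{U}_{\alpha}}(T-I)\le\liminf_n\psi^e_{n,\alpha}\le\limsup_n\psi^e_{n,\alpha}\le\sup_{\tilde{H}_{\alpha}}(T-I)$, and the open/closed (and boundary) discrepancy is indeed harmless because both outer suprema converge to $\sup_{\{e(\tilde h)=e\}}(T-I)$ as $\alpha\to0$ by the same compactness and semicontinuity argument you give, which coincides with the paper's final step; in particular one never needs the exact value of $\lim_n\psi^e_{n,\alpha}$ at fixed $\alpha$.
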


\begin{proof}
By definition, $\liminf \psi^{e}_{n,\alpha}$ and $\limsup
\psi^{e}_{n,\alpha}$ exist as $n \rightarrow \infty$. We will show
that they both approach $\sup_{\tilde
h:e(\tilde{h})=e}(T(\tilde{h})-I(\tilde{h}))$ as $\alpha
\rightarrow 0$. For this purpose we need to define a few sets. Let
$\tilde{U}_{\alpha}$ be the open strip of reduced graphons
$\tilde{h}$ with $e-\alpha<e(\tilde{h})<e+\alpha$, and let
$\tilde{H}_{\alpha}$ be the closed strip $e-\alpha \leq
e(\tilde{h}) \leq e+\alpha$. Fix $\epsilon>0$. Since $T$ is a
bounded function, there is a finite set $R$ such that the
intervals $\{(c, c +\epsilon): c\in R\}$ cover the range of $T$.
For each $c\in R$, let $\tilde{U}_{\alpha,c}$ be the open set of
reduced graphons $\tilde{h}$ with $e-\alpha<e(\tilde{h})<e+\alpha$
and $c<T(\tilde{h})<c+\epsilon$, and let $\tilde{H}_{\alpha,c}$ be
the closed set $e-\alpha \leq e(\tilde{h}) \leq e+\alpha$ and
$c\leq T(\tilde{h})\leq c+\epsilon$. It may be assumed without
loss of generality that $\tilde{U}_{\alpha,c}$ and
$\tilde{H}_{\alpha,c}$ are nonempty for each $c \in R$. Let
$|\tilde{U}_{\alpha,c}^n|$ and $|\tilde{H}_{\alpha,c}^n|$ denote
the number of graphs with $n$ vertices whose reduced graphons lie
in $\tilde{U}_{\alpha,c}$ or $\tilde{H}_{\alpha,c}$, respectively.
The large deviation principle, Theorem 2.3 of \cite{CV}, implies
that:
\begin{equation}
\limsup_{n\rightarrow \infty}\frac{\log
|\tilde{H}_{\alpha,c}^n|}{n^2}\leq -\inf_{\tilde{h}\in
\tilde{H}_{\alpha,c}}I(\tilde{h}),
\end{equation}
and that
\begin{equation}
\liminf_{n\rightarrow \infty}\frac{\log
|\tilde{U}_{\alpha,c}^n|}{n^2}\geq -\inf_{\tilde{h}\in
\tilde{U}_{\alpha,c}}I(\tilde{h}).
\end{equation}

We first consider $\limsup \psi^{e}_{n,\alpha}$.
\begin{equation}
\exp(n^2\psi^{e}_{n,\alpha})\leq \sum_{c\in
R}\exp(n^2(c+\epsilon))|\tilde{H}_{\alpha,c}^n|\leq |R|\sup_{c\in
R}\exp(n^2(c+\epsilon))|\tilde{H}_{\alpha,c}^n|.
\end{equation}
This shows that
\begin{equation}\label{supce}
\limsup_{n\rightarrow \infty}\psi^{e}_{n,\alpha}\leq \sup_{c\in
R}\left(c+\epsilon-\inf_{\tilde{h}\in
\tilde{H}_{\alpha,c}}I(\tilde{h})\right).
\end{equation}
Each $\tilde{h}\in \tilde{H}_{\alpha,c}$ satisfies
$T(\tilde{h})\geq c$. Consequently,
\begin{equation}
\sup_{\tilde{h}\in
\tilde{H}_{\alpha,c}}(T(\tilde{h})-I(\tilde{h}))\geq
\sup_{\tilde{h}\in
\tilde{H}_{\alpha,c}}(c-I(\tilde{h}))=c-\inf_{\tilde{h}\in
\tilde{H}_{\alpha,c}}I(\tilde{h}).
\end{equation}
Substituting this in (\ref{supce}) gives
\begin{eqnarray}
\limsup_{n\rightarrow \infty}\psi^{e}_{n,\alpha}&\leq&
\epsilon+\sup_{c\in R}\sup_{\tilde{h}\in
\tilde{H}_{\alpha,c}}(T(\tilde{h})-I(\tilde{h}))\\\notag&=&\epsilon+\sup_{\tilde{h}\in
\tilde{H}_{\alpha}}(T(\tilde{h})-I(\tilde{h})).
\end{eqnarray}

Next we consider $\liminf \psi^{e}_{n,\alpha}$.
\begin{equation}
\exp(n^2\psi^{e}_{n,\alpha})\geq \sup_{c\in R}
\exp(n^2c)|\tilde{U}_{\alpha,c}^n|.
\end{equation}
Therefore for each $c\in R$,
\begin{equation}\label{infc}
\liminf_{n \rightarrow \infty}\psi^{e}_{n,\alpha}\geq
c-\inf_{\tilde{h}\in \tilde{U}_{\alpha,c}}I(\tilde{h}).
\end{equation}
Each $\tilde{h}\in \tilde{U}_{\alpha,c}$ satisfies
$T(\tilde{h})<c+\epsilon$. Therefore
\begin{equation}
\sup_{\tilde{h}\in
\tilde{U}_{\alpha,c}}(T(\tilde{h})-I(\tilde{h}))\leq
\sup_{\tilde{h}\in
\tilde{U}_{\alpha,c}}(c+\epsilon-I(\tilde{h}))=c+\epsilon-\inf_{\tilde{h}\in
\tilde{U}_{\alpha,c}}I(\tilde{h}).
\end{equation}
Together with (\ref{infc}), this shows that
\begin{eqnarray}
\liminf_{n \rightarrow \infty}\psi^{e}_{n,\alpha}&\geq&
-\epsilon+\sup_{c\in R}\sup_{\tilde{h}\in
\tilde{U}_{\alpha,c}}(T(\tilde{h})-I(\tilde{h}))\\\notag&=&-\epsilon+\sup_{\tilde{h}\in
\tilde{U}_{\alpha}}(T(\tilde{h})-I(\tilde{h})).
\end{eqnarray}

Since $\epsilon$ is arbitrary, this yields a chain of inequalities
\begin{multline}
\label{chain} \sup_{\tilde{h}\in
\tilde{H}_{\alpha-\alpha^2}}(T(\tilde{h})-I(\tilde{h}))\leq
\sup_{\tilde{h}\in
\tilde{U}_{\alpha}}(T(\tilde{h})-I(\tilde{h}))\leq \liminf_{n
\rightarrow \infty}\psi^{e}_{n,\alpha}\\\leq \limsup_{n
\rightarrow \infty}\psi^{e}_{n,\alpha}\leq \sup_{\tilde{h}\in
\tilde{H}_{\alpha}}(T(\tilde{h})-I(\tilde{h})).
\end{multline}
As $\alpha \rightarrow 0+$, the limits of
$\sup_{\tilde{H}_{\alpha-\alpha^2}}(T(\tilde{h})-I(\tilde{h}))$
and $\sup_{\tilde{H}_{\alpha}}(T(\tilde{h})-I(\tilde{h}))$ are the
same, so we have proven that
\begin{equation}
\label{psie} \psi^{e}=\lim_{\alpha \rightarrow 0}\lim_{n
\rightarrow \infty}\psi^{e}_{n,\alpha}=\lim_{\alpha\rightarrow
0}\sup_{\tilde{h}\in
\tilde{H}_{\alpha}}(T(\tilde{h})-I(\tilde{h})).
\end{equation}
First we establish that the right-hand side of (\ref{psie}) is
equal to $\sup_{\tilde{H}_0}(T(\tilde{h})-I(\tilde{h}))$, where
$\tilde{H}_0=\{\tilde{h}: e(\tilde{h})=e\}$. By the compactness of
$\tilde{\mathcal{W}}$ and the continuity of $e$, $\tilde{H}_0$ is
a nonempty compact set. By definition, we can find a sequence of
reduced graphons $\tilde{h}_{\alpha} \in \tilde{H}_{\alpha}$ such
that $\lim_{\alpha \rightarrow
0}(T(\tilde{h}_{\alpha})-I(\tilde{h}_{\alpha}))=\lim_{\alpha\rightarrow
0}\sup_{\tilde{H}_{\alpha}}(T(\tilde{h})-I(\tilde{h}))$. These
reduced graphons converge to a reduced graphon $\tilde{h}_0 \in
\tilde{H}_0$. Since $T$ is continuous and $I$ is lower
semi-continuous,
\begin{equation}
\sup_{\tilde{H}_0} (T(\tilde{h})-I(\tilde{h})) \geq
T(\tilde{h}_0)-I(\tilde{h}_0) \geq \lim_{\alpha \rightarrow
0}(T(\tilde{h}_{\alpha})-I(\tilde{h}_{\alpha})).
\end{equation}
However, since $\tilde{H}_0 \subset \tilde{H}_{\alpha}$,
$\sup_{\tilde{H}_0} (T(\tilde{h})-I(\tilde{h}))$ is at least as
small as $\sup_{\tilde{H}_{\alpha}}(T(\tilde{h})-I(\tilde{h}))$.
Our claim thus follows.
\end{proof}

Fix $e$. Let $\tilde{H}$ be the subset of $\tilde{H}_0$ where
$T(\tilde{h})-I(\tilde{h})$ is maximized. By the compactness of
$\tilde{H}_0$, the continuity of $T$ and the lower semi-continuity
of $I$, $\tilde{H}$ is a nonempty compact set. Theorem \ref{main1}
gives an asymptotic formula for $\psi^{e}_{n,\alpha}$ but says
nothing about the behavior of a typical random graph sampled from
the constrained exponential model (\ref{cpmf}). In the
unconstrained case (\ref{pmf2}) however, we know that the quotient
image $\tilde{h}^{G_n}$ of a sampled graph must lie close to the
corresponding maximizing set $\tilde{H}$ for $\psi$ with
probability vanishing in $n$. We expect that a similar phenomenon
should occur in the constrained model as well, and this is
confirmed by Theorem \ref{main2}.

\begin{theorem}
\label{main2} Take $e\in[0,1]$. Let $\tilde{H}$ be defined as
above. Let $\PR^{e}_{n,\alpha}(G_n)$ (\ref{cpmf}) be the
conditional probability measure on $\mathcal{G}_n$. Then for any
$\eta>0$ and $\alpha$ sufficiently small there exist $C, \gamma>0$
such that for all $n$ large enough,
\begin{equation}
\PR^{e}_{n,\alpha}\left(\delta_{\square}(\tilde{h}^{G_n},
\tilde{H})\geq \eta\right)\leq Ce^{-n^2 \gamma}.
\end{equation}
\end{theorem}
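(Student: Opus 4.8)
The plan is to mimic the concentration argument of Theorem 3.2 in \cite{CD}, but carried out at the level of the conditional measure $\PR^e_{n,\alpha}$. Fix $\eta>0$. The event we must control is $B_\eta^n=\{G_n:\delta_\square(\tilde h^{G_n},\tilde H)\geq\eta\text{ and }|e(G_n)-e|<\alpha\}$, since graphs outside the strip have zero conditional probability. Writing $\PR^e_{n,\alpha}(B_\eta^n)=\exp(-n^2\psi^e_{n,\alpha})\sum_{G_n\in B_\eta^n}\exp(n^2 T(\tilde h^{G_n}))$, it suffices to produce a lower bound on $\psi^e_{n,\alpha}$ (for which Theorem \ref{main1} and the chain (\ref{chain}) already give $\psi^e_{n,\alpha}\geq\psi^e-o(1)$ as $n\to\infty$ for each fixed small $\alpha$) and a matching upper bound, strictly smaller, on the numerator $\frac1{n^2}\log\sum_{G_n\in B_\eta^n}\exp(n^2T(\tilde h^{G_n}))$. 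The desired constant is then $\gamma=\tfrac12(\psi^e-\sup_{B_\eta}(T-I))>0$, modulo showing that this sup is indeed strictly below $\psi^e$.

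For the numerator I would cover $B_\eta^n$ exactly as in the proof of Theorem \ref{main1}: since $T$ is bounded, pick $\epsilon>0$ and a finite $R$ with $\{(c,c+\epsilon):c\in R\}$ covering the range of $T$, and for each $c$ let $\tilde H_{\alpha,c,\eta}$ be the closed set of reduced graphons with $e-\alpha\le e(\tilde h)\le e+\alpha$, $c\le T(\tilde h)\le c+\epsilon$, and $\delta_\square(\tilde h,\tilde H)\geq\eta$. Counting as before, $\sum_{G_n\in B_\eta^n}\exp(n^2T(\tilde h^{G_n}))\leq |R|\sup_{c\in R}\exp(n^2(c+\epsilon))|\tilde H^n_{\alpha,c,\eta}|$, and the large deviation upper bound from \cite{CV} yields
\begin{equation}
\limsup_{n\to\infty}\frac1{n^2}\log\sum_{G_n\in B_\eta^n}\exp(n^2T(\tilde h^{G_n}))\leq\epsilon+\sup_{\tilde h\in\tilde F_{\alpha,\eta}}(T(\tilde h)-I(\tilde h)),
\end{equation}
where $\tilde F_{\alpha,\eta}=\{\tilde h: e-\alpha\le e(\tilde h)\le e+\alpha,\ \delta_\square(\tilde h,\tilde H)\geq\eta\}$. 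Letting $\epsilon\to0$ gives the clean bound $\sup_{\tilde F_{\alpha,\eta}}(T-I)$.

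The crux is a compactness argument showing $\lim_{\alpha\to0}\sup_{\tilde F_{\alpha,\eta}}(T(\tilde h)-I(\tilde h))<\psi^e=\sup_{\tilde H_0}(T-I)$, so that for all sufficiently small $\alpha$ the gap is positive; this is the main obstacle, and it is exactly the place where the conditioning complicates matters relative to \cite{CD}. Suppose not: then there are $\alpha_k\to0$ and $\tilde h_k\in\tilde F_{\alpha_k,\eta}$ with $T(\tilde h_k)-I(\tilde h_k)\to L\geq\psi^e$. By compactness of $(\tilde{\mathcal W},\delta_\square)$, pass to a subsequence with $\tilde h_k\to\tilde h_*$; continuity of $e$ forces $e(\tilde h_*)=e$, so $\tilde h_*\in\tilde H_0$, while $\delta_\square(\tilde h_*,\tilde H)\geq\eta$ by continuity of the distance, so $\tilde h_*\notin\tilde H$. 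Continuity of $T$ and lower semicontinuity of $I$ give $T(\tilde h_*)-I(\tilde h_*)\geq\limsup_k(T(\tilde h_k)-I(\tilde h_k))=L\geq\psi^e=\sup_{\tilde H_0}(T-I)$, hence $\tilde h_*$ attains the maximum over $\tilde H_0$ and therefore lies in $\tilde H$ by the very definition of $\tilde H$ — contradicting $\tilde h_*\notin\tilde H$. Thus fix $\alpha$ small enough that $\sup_{\tilde F_{\alpha,\eta}}(T-I)\leq\psi^e-3\gamma_0$ for some $\gamma_0>0$; combining with $\psi^e_{n,\alpha}\geq\psi^e-\gamma_0$ for $n$ large and the numerator bound above (taking $\epsilon$ so small that the leftover is $\leq\gamma_0$), we get $\frac1{n^2}\log\PR^e_{n,\alpha}(B_\eta^n)\leq -\gamma_0$ for all large $n$, i.e.\ $\PR^e_{n,\alpha}(\delta_\square(\tilde h^{G_n},\tilde H)\geq\eta)\leq Ce^{-n^2\gamma}$ with $\gamma=\gamma_0$ and $C$ absorbing the finitely many small-$n$ terms and the factor $|R|$. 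A minor point to check along the way is that $\tilde F_{\alpha,\eta}$ is nonempty for the relevant $\alpha,\eta$ (otherwise the bound is vacuous and the statement is trivially true), and that $\psi^e_{n,\alpha}$ is genuinely finite, which follows since the strip contains at least one graph for $\alpha$ not too small.
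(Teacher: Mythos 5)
Your proposal is correct and follows essentially the same route as the paper's proof: the same covering of the range of $T$ by $\epsilon$-intervals combined with the large deviation upper bound of \cite{CV} for the numerator, the lower bound on $\psi^e_{n,\alpha}$ from the chain (\ref{chain}), and a compactness/semicontinuity argument giving a strict gap between $\sup_{\tilde H_0}(T-I)$ and the supremum over graphons $\eta$-far from $\tilde H$. The only (cosmetic) difference is that you obtain the gap by a single contradiction argument along $\alpha_k\to 0$, while the paper first compares the maxima over $\tilde H_0$ and $\tilde A_0$ and then lets $\alpha\to 0$ as in Theorem \ref{main1}.
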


\begin{proof}
We check that the conditional probability measure
$\PR^{e}_{n,\alpha}$ is well defined for all large enough $n$. It
suffices to show that $\psi^{e}_{n,\alpha}$ is finite. But from
(\ref{chain}), $\psi^{e}_{n,\alpha}$ is trapped between
$\sup_{\tilde{h}\in
\tilde{U}_{\alpha}}(T(\tilde{h})-I(\tilde{h}))$ and
$\sup_{\tilde{h}\in
\tilde{H}_{\alpha}}(T(\tilde{h})-I(\tilde{h}))$, which are clearly
both finite.

Recall that $\tilde{H}_{\alpha}$ is the set of reduced graphons
$\tilde{h}$ with $e-\alpha\leq e(\tilde{h})\leq e+\alpha$. Take
any $\eta>0$. Let $\tilde{A}_{\alpha}$ be the subset of
$\tilde{H}_{\alpha}$ consisting of reduced graphons that are at
least $\eta$-distance away from $\tilde{H}$,
\begin{equation}
\tilde{A}_{\alpha}=\{\tilde{h}\in \tilde{H}_{\alpha}:
\delta_{\square}(\tilde{h}, \tilde{H})\geq \eta\}.
\end{equation}
It is easy to see that $\tilde{A}_{\alpha}$ is a closed set.
Without loss of generality we assume that $\tilde{A}_{\alpha}$ is
nonempty for every $\alpha>0$, since otherwise our claim trivially
follows. Under this nonemptiness assumption we can find a sequence
of reduced graphons $\tilde{h}_{\alpha} \in \tilde{A}_{\alpha}$
converging to a reduced graphon $\tilde{h}_0 \in \tilde{A}_0$,
which shows that $\tilde{A}_0$ is nonempty as well. By the
compactness of $\tilde{H}_0$ and $\tilde{H}$, and the upper
semi-continuity of $T-I$, it follows that
\begin{equation}
\max_{\tilde{h}\in
\tilde{H}_0}(T(\tilde{h})-I(\tilde{h}))-\max_{\tilde{h}\in
\tilde{A}_0}(T(\tilde{h})-I(\tilde{h}))>0.
\end{equation}
From the proof of Theorem \ref{main1} we see that
\begin{equation}
\limsup_{\tilde{H}_{\alpha}}(T(\tilde{h})-I(\tilde{h}))=\max_{\tilde{H}_0}(T(\tilde{h})-I(\tilde{h})).
\end{equation}
Similarly, we have
\begin{equation}
\limsup_{\tilde{A}_{\alpha}}(T(\tilde{h})-I(\tilde{h}))=\max_{\tilde{A}_0}(T(\tilde{h})-I(\tilde{h})).
\end{equation}
This implies that for $\alpha$ sufficiently small,
\begin{equation}
2\gamma:=\sup_{\tilde{h}\in
\tilde{H}_{\alpha-\alpha^2}}(T(\tilde{h})-I(\tilde{h}))-\sup_{\tilde{h}\in
\tilde{A}_{\alpha}}(T(\tilde{h})-I(\tilde{h}))>0.
\end{equation}

Choose $\epsilon=\gamma$ and define $\tilde{H}_{\alpha,c}$ and $R$
as in the proof of Theorem \ref{main1}. Let
$\tilde{A}_{\alpha,c}=\tilde{A}_{\alpha}\cap
\tilde{H}_{\alpha,c}$. Then
\begin{equation}
\PR^{e}_{n,\alpha}(\tilde{h}^{G_n} \in \tilde{A}_{\alpha})\leq
\exp(-n^2\psi^{e}_{n,\alpha})|R|\sup_{c\in
R}\exp(n^2(c+\gamma))|\tilde{A}_{\alpha,c}^n|.
\end{equation}
While bounding the last term above, it may be assumed without loss
of generality that $\tilde{A}_{\alpha,c}$ is nonempty for each
$c\in R$. Similarly as in the proof of Theorem \ref{main1}, the
above inequality gives
\begin{equation}\label{supsup}
\limsup_{n\rightarrow \infty}\frac{\log
\PR^{e}_{n,\alpha}(\tilde{h}^{G_n}\in
\tilde{A}_{\alpha})}{n^2}\leq \sup_{c\in
R}\left(c+\gamma-\inf_{\tilde{h}\in
\tilde{A}_{\alpha,c}}I(h)\right)-\sup_{\tilde{h}\in
\tilde{H}_{\alpha-\alpha^2}}\left(T(\tilde{h})-I(\tilde{h})\right).
\end{equation}
Each $\tilde{h}\in \tilde{A}_{\alpha,c}$ satisfies
$T(\tilde{h})\geq c$. Consequently,
\begin{equation}
\sup_{\tilde{h}\in
\tilde{A}_{\alpha,c}}(T(\tilde{h})-I(\tilde{h}))\geq
c-\inf_{\tilde{h}\in \tilde{A}_{\alpha,c}}I(\tilde{h}).
\end{equation}
Substituting this in (\ref{supsup}) gives
\begin{equation}
\limsup_{n\rightarrow \infty}\frac{\log
\PR^{e}_{n,\alpha}(\tilde{h}^{G_n}\in
\tilde{A}_{\alpha})}{n^2}\leq \gamma+\sup_{\tilde{h}\in
\tilde{A}_{\alpha}}(T(\tilde{h})-I(\tilde{h}))-\sup_{\tilde{h}\in
\tilde{H}_{\alpha-\alpha^2}}(T(\tilde{h})-I(\tilde{h}))=-\gamma.
\end{equation}
This completes the proof.
\end{proof}

\section{An application}
\label{3} Theorems \ref{main1} and \ref{main2} in the previous
section illustrate the importance of finding the maximizing
graphons for $T-I$ subject to certain constraints. Similar
optimization problems have also been studied in the context of
upper tails of random graphs by Lubetzky and Zhao \cite{LZ}. The
optimizers aid us in understanding the limiting conditional
probability distribution and the global structure of a random
graph $G_n$ drawn from the constrained exponential model. Indeed,
knowledge of such graphons would help us understand the limiting
probability distribution and the global structure of a random
graph $G_n$ drawn from the unconstrained exponential model as
well, since we can always carry out the unconstrained optimization
in steps: first consider a constrained optimization (referred to
as ``micro analysis''), then take into consideration of all
possible constraints (referred to as ``macro analysis''). However,
as straight-forward as it sounds, due to the myriad of structural
possibilities of graphons, both the unconstrained (\ref{setmax})
and constrained (\ref{csetmax}) optimization problems are not
always explicitly solvable. So far major simplification has only
been achieved in the ``attractive'' case where the parameters
$\beta_2,\ldots,\beta_k$ are all nonnegative \cite{CD} \cite{RY}
\cite{Y} and for $k$-star models \cite{CD}, whereas a complete
analysis of either (\ref{setmax}) or (\ref{csetmax}) in the
``repulsive'' region where the parameters $\beta_2,\ldots,\beta_k$
are all negative has proved to be very difficult. This section
will provide some phase transition results on the constrained
``repulsive'' edge-triangle exponential random graph model and
discuss their possible generalizations. Using the same arguments,
it is also possible to establish the phase transition in the
``attractive'' region of the parameter space. We make these
notions precise in the following.

The unconstrained edge-triangle model is a $2$-parameter
exponential random graph model obtained by taking $H_1$ to be a
single edge and $H_2$ to be a triangle in (\ref{pmf}). More
explicitly, in the edge-triangle model, the probability measure
$\PR_n^{\beta}$ is
\begin{equation}
\label{etpmf} \PR_n^{\beta}(G_n)=\exp\left(n^2(\beta_1 e(G_n)+
\beta_2 t(G_n)-\psi_n^{\beta})\right),
\end{equation}
where $\beta=(\beta_1,\beta_2)$ are $2$ real parameters, $e(G_n)$
and $t(G_n)$ are the edge and triangle densities of $G_n$, and
$\psi_n^\beta$ is the normalization constant. As before, we assume
that the ideal edge density $e$ is fixed. The limiting
construction described at the beginning of Section \ref{2} will
then yield the asymptotic conditional normalization constant
$\psi^e$. From (\ref{csetmax}) we see that $\psi^e$ depends on
both parameters $\beta_1$ and $\beta_2$, however the $\beta_1$
dependence is linear: $\psi^e$ is equal to $\beta_1 e$ plus a
function independent of $\beta_1$. In particular $\beta_1$ plays
no role in the maximization problem, so we can consider it fixed
at value $\beta_1=0$. The only relevant parameters then are $e$
and $\beta_2$.

To highlight this parameter dependence, in the following we will
write $\psi^e$ as $\psi^{e,\beta_2}$ instead. We are particularly
interested in the asymptotics of $\psi^{e,\beta_2}$ when $\beta_2$
is negative, the so-called repulsive region. Naturally, varying
$\beta_2$ allows one to adjust the influence of the triangle
density of the graph on the probability distribution. The more
negative the $\beta_2$, the more unlikely that graphs with a large
number of triangles will be observed. When $\beta_2$ approaches
negative infinity, the most probable graph would likely be
triangle free. At the other extreme, when $\beta_2$ is zero, the
edge-triangle model reduces to the well-studied
Erd\H{o}s-R\'{e}nyi model, where edges between different vertex
pairs are independently included. The structure of triangle free
graphs and disordered Erd\H{o}s-R\'{e}nyi graphs are apparently
quite different, and thus a phase transition is expected as
$\beta_2$ decays from $0$ to $-\infty$. In fact, it is believed
that, quite generally, repulsive models exhibit a transition
qualitatively like the solid/fluid transition, in that a region of
parameter space depicting emergent multipartite structure, which
is in imitation of the structure of solids, is separated by a
phase transition from a region of disordered graphs, which
resemble fluids. The existence of such a transition in
unconstrained $2$-parameter models whose subgraph $H_2$ has
chromatic number at least $3$ has been proved by Aristoff and
Radin \cite{AR} based on a symmetry breaking result from
\cite{CD}. Theorem \ref{csep} below gives a corresponding result
in the constrained edge-triangle model. Its proof though is quite
different from the parallel result in \cite{AR} and relies instead
on some analysis arguments.

\begin{theorem}
\label{csep} Consider the constrained repulsive edge-triangle
exponential random graph model as described above. Let $e$ be
arbitrary but fixed. Let $\beta_2$ vary from $0$ to $-\infty$.
Then $\psi^{e,\beta_2}$ is not analytic at at least one value of
$\beta_2$.
\end{theorem}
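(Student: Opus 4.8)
The plan is to exploit the variational characterization from Theorem \ref{main1}, namely
\[
\psi^{e,\beta_2}=\sup_{\tilde h:\,e(\tilde h)=e}\bigl(\beta_2\, t(\tilde h)-I(\tilde h)\bigr),
\]
and to track its behavior as $\beta_2$ ranges over $(-\infty,0]$. The key observation is that $\psi^{e,\beta_2}$ is convex in $\beta_2$ (a supremum of affine functions of $\beta_2$), hence automatically continuous and differentiable at all but countably many points, with derivative equal to the triangle density of the maximizing graphon whenever the maximizer is unique and the derivative exists. So to prove non-analyticity it suffices to show that the ``optimal triangle density'' as a function of $\beta_2$ cannot be real-analytic on all of $(-\infty,0]$; equivalently, that $\psi^{e,\beta_2}$ cannot be given by a single analytic formula throughout.

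First I would pin down the two endpoints of the parameter range. At $\beta_2=0$ the problem is the pure entropy maximization $\sup_{e(\tilde h)=e}(-I(\tilde h))$, whose unique solution is the constant graphon $h\equiv e$ (since $I$ is strictly convex and $-I$ is maximized pointwise at the constant subject to the linear edge constraint), giving optimal triangle density exactly $e^3$ and $\psi^{e,0}=-I(e)$. At the other end, as $\beta_2\to-\infty$, the term $\beta_2 t(\tilde h)$ forces the optimizer toward small triangle density: I would argue that the optimal triangle density $t^*(\beta_2)$ tends to the minimum triangle density $t_{\min}(e)$ compatible with edge density $e$, which by the Kruskal--Katona / Razborov razor-type results (and the explicit results of Razborov on the edge-triangle region) is strictly less than $e^3$ for $e\in(0,1)$, and moreover is piecewise-defined (linear on the intervals $[1-1/k,1-1/(k+1)]$) — in particular the minimizers for large $|\beta_2|$ are multipartite-type graphons, structurally distinct from the constant graphon. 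Thus $t^*(0)=e^3$ while $\lim_{\beta_2\to-\infty}t^*(\beta_2)=t_{\min}(e)<e^3$.

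Now suppose, for contradiction, that $\psi^{e,\beta_2}$ were analytic on all of $(-\infty,0)$. Then its derivative $\frac{d}{d\beta_2}\psi^{e,\beta_2}=t^*(\beta_2)$ would be real-analytic there as well (using convexity to justify that the one-sided derivatives agree and equal the triangle density of the maximizer, plus an envelope/Danskin-type argument that the maximizer is essentially unique off the non-analytic set). I would then derive a contradiction from the fact that $t^*$ is a monotone (non-increasing) analytic function interpolating between $e^3$ and $t_{\min}(e)$: the cleanest route is to show the maximizing graphon stays equal to the constant $e$ for $\beta_2$ in a neighborhood of $0$ — because at $\beta_2=0$ the constant is the \emph{strict} maximizer and a small perturbation of the linear term $\beta_2 t$ keeps it so, by a stability argument using strict convexity of $I$ and boundedness of the triangle functional's variation — so that $t^*(\beta_2)\equiv e^3$ on an interval $(-\delta,0]$. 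An analytic function constant on an interval is constant everywhere, forcing $t^*\equiv e^3$ on $(-\infty,0)$, contradicting $\lim_{\beta_2\to-\infty}t^*(\beta_2)=t_{\min}(e)<e^3$. Hence $\psi^{e,\beta_2}$ fails to be analytic at some point of $(-\infty,0)$.

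The main obstacle will be the rigorous justification of the two structural facts about the maximizers: (i) that for $\beta_2$ near $0$ the unique maximizer is genuinely the constant graphon $e$ — this requires quantifying how much the triangle term can tilt the entropy-dominated optimization, and handling the subtlety that $I$ is only lower semi-continuous, not continuous, on $\tilde{\mathcal W}$; and (ii) that as $\beta_2\to-\infty$ the optimal triangle density is bounded away from $e^3$, which needs the nontrivial extremal-graph-theory input that $t_{\min}(e)<e^3$ strictly for $0<e<1$ together with an argument that the $\sup$ actually ``sees'' this lower bound in the limit (a compactness argument on a minimizing sequence of graphons at fixed edge density). A secondary technical point is the envelope argument identifying $\frac{d}{d\beta_2}\psi^{e,\beta_2}$ with $t^*(\beta_2)$ at points of differentiability; this is standard convex analysis but must be stated carefully because the supremum is over the infinite-dimensional compact space $(\tilde{\mathcal W},\delta_\square)$.
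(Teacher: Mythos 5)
Your overall skeleton is the same as the paper's: show the optimizer is pinned at the Erd\H{o}s--R\'{e}nyi point for $\beta_2$ in a neighborhood of $0$, show the behavior as $\beta_2\to-\infty$ is incompatible with that, and let analyticity propagate the constancy to force a contradiction. The gap is in the step you yourself flag as the main obstacle, and the tools you propose for it cannot close it. To keep the maximizer exactly at the constant graphon (so that $t^*(\beta_2)\equiv e^3$, equivalently $\psi^{e,\beta_2}\equiv -I(e)$, on some $(-\delta,0]$) you need the entropy deficit to dominate the triangle deficit \emph{linearly} near $t=e^3$: for all $\tilde h$ with $e(\tilde h)=e$ one needs $I(\tilde h)-I(e)\geq |\beta_2|\,(e^3-t(\tilde h))$. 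Strict convexity of $I$ plus Lipschitz/bounded variation of the triangle functional gives only $I(\tilde h)-I(e)\gtrsim \|h-e\|_2^2$ and $e^3-t(\tilde h)\lesssim \|h-e\|_2$, i.e.\ an entropy deficit bounded below by a constant times $(e^3-t)^2$. A quadratic lower bound loses to the linear tilt $|\beta_2|(e^3-t)$ as $t\to e^3$ for \emph{every} $\beta_2<0$, so this argument cannot rule out the maximizer leaving the constant graphon immediately, with $t^*(\beta_2)$ (and $\psi$) conceivably analytic and non-constant all the way down --- no contradiction would follow. The missing ingredient is the superlinear estimate of Radin--Sadun used in the paper, $s(e,e^3)-s(e,t)\geq c(e)\,(e^3-t)^{2/3}$ for $t\leq e^3$, which reflects the fact that edge-density-preserving perturbations of the constant graphon decrease the triangle density only at third order while costing entropy at second order; since $(e^3-t)^{2/3}\geq e^3-t$, this yields exact constancy of $\psi^{e,\beta_2}$ on $(-c(e),0)$, which is the engine of the whole proof. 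Without this input (or an equivalent third-order-variation argument), your proof does not go through.

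Two secondary points. First, your description of the lower boundary of the edge--triangle region as linear on the intervals $[1-1/k,\,1-1/(k+1)]$ is incorrect: by Razborov it is a piecewise algebraic curve made of concave ``scallops''; this is harmless for your purposes since all you need is $t_{\min}(e)<e^3$, but it should be fixed. Second, for $e>1/2$ one has $t_{\min}(e)>0$, so $\psi^{e,\beta_2}\to-\infty$ as $\beta_2\to-\infty$ and the statement ``$t^*(\beta_2)\to t_{\min}(e)$'' needs the small adjustment of comparing against $\beta_2 t_{\min}(e)$ (or, as in the paper, using that the entropies of the minimal-triangle graphons are strictly below $-I(e)$); also, running the contradiction at the level of $\psi$ itself, as the paper does, avoids the envelope/Danskin technicalities you would otherwise need to identify $\frac{\partial}{\partial\beta_2}\psi^{e,\beta_2}$ with $t^*(\beta_2)$.
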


\begin{proof}
We first consider the case $e\le 1/2$; the case $e>1/2$ is
similar, see the comments at the end of the proof.

Let $e(\tilde{h})\le1/2$ be the edge density of a reduced graphon
$\tilde{h}$ and $t(\tilde{h})$ be the triangle density, obtained
by taking $H$ to be a triangle in (\ref{tt}). By (\ref{csetmax}),
\begin{eqnarray}
\label{etsetmax} \psi^{e,\beta_2}&=&\sup_{\tilde{h}\in
\tilde{\mathcal{W}}: e(\tilde{h})=e}\left(\beta_2
t(\tilde{h})-I(\tilde{h})\right)\\\notag
&=&\sup_t\sup_{\tilde{h}\in \tilde{\mathcal{W}}: e(\tilde{h})=e,
t(\tilde{h})=t}\left(\beta_2 t-I(\tilde{h})\right)\\\notag
&=&\sup_t \left(\beta_2 t+s(e, t)\right),
\end{eqnarray}
where for notational convenience, we denote by $s(e, t)$ the
maximum value of $-I(\tilde{h})$ over all reduced graphons with
$e(\tilde{h})=e$ and $t(\tilde{h})=t$. We examine (\ref{etsetmax})
at the two extreme values of $\beta_2$ first. Since $I$ is convex,
when $\beta_2=0$,
\begin{equation}
\psi^{e,0}=\sup_{\tilde{h}\in \tilde{\mathcal{W}}:
e(\tilde{h})=e}\left(-I(\tilde{h})\right)\leq -I(e)
\end{equation}
by Jensen's inequality, and the equality is attained only when $h
\equiv e$, the associated graphon for an Erd\H{o}s-R\'{e}nyi graph
with edge formation probability $e$. This also ensures that when
we take $\beta_2 \leq 0$, any maximizing graphon $h$ for
(\ref{etsetmax}) will satisfy $t(\tilde{h})\leq e^3$. For the
other extreme, take an arbitrary sequence $\beta_2^{(i)}
\rightarrow -\infty$, and let $\tilde{h}_i$ be a maximizing
reduced graphon for each $\psi^{e,\beta_2^{(i)}}$. Let $\tilde{h}$
be a limit point of $\tilde{h}_i$ in $\widetilde{\mathcal{W}}$
(its existence is guaranteed by the compactness of
$\tilde{\mathcal{W}}$). We say that a graphon $h:
[0,1]^2\rightarrow [0,1]$ is symmetric bipodal if it is of the
form
\begin{equation}
h(x, y)=\left\{%
\begin{array}{ll}
    p & \hbox{if $x<1/2<y$ or $x>1/2>y$;} \\
    q & \hbox{if $x, y<1/2$ or $x, y>1/2$,} \\
\end{array}%
\right.
\end{equation}
where $p$ and $q$ are constants taking values between $0$ and $1$.
Suppose $t(\tilde{h})>0$. Then by the continuity of $t$ and the
boundedness of $I$, $\lim_{i \rightarrow
\infty}\psi^{e,\beta_2^{(i)}}=-\infty$. But this is impossible
since $\psi^{e, \beta_2^{(i)}}$ is uniformly bounded below, as can
be seen by considering the symmetric bipodal graphon $h$ with
$p=2e$ and $q=0$ as a test function, which corresponds to a
complete bipartite graph with $1-2e$ fraction of edges randomly
deleted. Thus $t(\tilde{h})=0$. The rest of the proof will utilize
the following useful features of $s(e, t)$ derived in Radin and
Sadun \cite{RS1} \cite{RS}. From the convexity of $I$, Theorem 4.1
in \cite{RS1} finds that for $e\le 1/2$, $s(e, 0)=-I(2e)/2$ and
this maximum is achieved only at the reduced symmetric bipodal
graphon $\tilde{h}$ depicted above. Further utilizing properties
of the Hermitian trace class operator, Theorem 1.1 in \cite{RS}
states that for any $e\in[0,1]$ and for $t\leq e^3$,
\begin{equation}\label{2/3}
s(e, e^3)-s(e, t)\geq c(e^3-t)^{2/3}
\end{equation}
for some $c=c(e)>0$. Thus we have
\begin{equation}
\label{lim1} \lim_{\beta_2 \rightarrow
-\infty}\psi^{e,\beta_2}=-I(2e)/2;
\end{equation}
while (\ref{2/3}) implies that for $\beta_2>-c(e)$ and $t<e^3$,
\begin{equation}
-\beta_2(e^3-t)<s(e, e^3)-s(e, t).
\end{equation}
In other words, the constant graphon $h \equiv e$ still yields the
maximum value for (\ref{etsetmax}) for these small values of
$\beta_2$. Thus regarded as a function of $\beta_2$,
$\psi^{e,\beta_2}$ is constant on the interval $(-c(e),0)$ and
$\psi^{e,\beta_2}=-I(e)$. This shows that $\psi^{e,\beta_2}$ must
lose its analyticity at at least one $\beta_2$ as $\beta_2$ varies
from $0$ to $-\infty$, since otherwise we would have
\begin{equation}
\lim_{\beta_2 \rightarrow -\infty}\psi^{e,\beta_2}=-I(e),
\end{equation}
in contradiction with (\ref{lim1}).

For $e>1/2$, the lower boundary of attainable $t(\tilde h)$ is
nonzero; see Figure \ref{razfig}. However the graphons attaining
the minimum $t$ values for each $e$ are known, see \cite{RS1}, and
their rate functions are strictly less than $-I(e),$ so the proof
above goes through without change.
\end{proof}

\begin{figure}[htbp]
\center\includegraphics[width=3in]{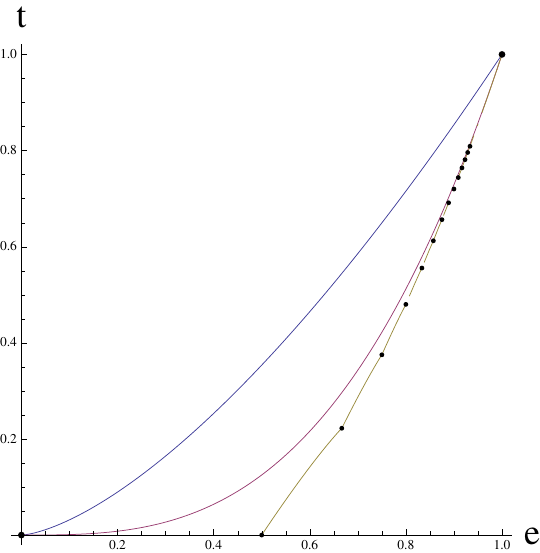}
\caption{\label{razfig} Region of attainable edge ($e$) and
triangle ($t$) densities for graphons. The upper boundary is the
curve $t=e^{3/2}$ and the lower boundary is a piecewise algebraic
curve with infinitely many concave pieces; see \cite{Raz}. The red
curve is the Erd\H{o}s-Renyi curve $t=e^3$.}
\end{figure}

The proof of Theorem \ref{csep} does not rely heavily on the
definition of the edge-triangle model, except for the
non-differentiability of $s(e, t)$ at $t=e^3$ and the structure of
the maximizing graphons at the two extreme values of $\beta_2$.
The following extension of this theorem may not come as a
surprise.

\begin{theorem}
Take $H_1$ a single edge and $H_2$ a different, arbitrary simple
graph with chromatic number $\chi(H_2)$ at least $3$. Consider the
constrained repulsive $2$-parameter exponential random graph model
where the probability measure $\PR_n^{e,\beta_2}$ is given by
\begin{equation}
\label{2pmf} \PR_n^{e,\beta_2}(G_n)=\exp\left(n^2(\beta_2 t(H_2,
G_n)-\psi_n^{e,\beta_2})\right).
\end{equation}
Let the edge density $e$ be fixed. Let the second parameter
$\beta_2$ vary from $0$ to $-\infty$. Then $\psi^{e,\beta_2}$
loses its analyticity at at least one value of $\beta_2$.
\end{theorem}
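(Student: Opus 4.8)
The plan is to transcribe the proof of Theorem~\ref{csep} almost verbatim, replacing the triangle by $H_2$; as the preceding remark indicates, the only model-specific inputs are (i) a substitute for the non-differentiability estimate (\ref{2/3}), which identifies the maximizing graphon for $\beta_2$ slightly negative, and (ii) the structure of the maximizers as $\beta_2\to-\infty$, where the hypothesis $\chi(H_2)\ge 3$ is used. We may assume $e\in(0,1)$, the cases $e\in\{0,1\}$ being trivial. Exactly as for (\ref{etsetmax}), Theorem~\ref{main1} (with $\beta_1=0$) gives
\[
\psi^{e,\beta_2}=\sup_{\tilde h\in\tilde{\mathcal W}:\,e(\tilde h)=e}\bigl(\beta_2\,t(H_2,\tilde h)-I(\tilde h)\bigr),
\]
and we write $\tau_0:=e^{|E(H_2)|}=t(H_2,h\equiv e)$.

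For (i) I would avoid quoting any Radin--Sadun-type inequality (none is available for general $H_2$) and instead argue by an elementary expansion. Since $I''\ge 2$ on $(0,1)$, a second-order Taylor bound gives $-I(e)-(-I(\tilde h))\ge\|h-e\|_{L^2([0,1]^2)}^2$ for any representative $h$ of a reduced graphon with $e(\tilde h)=e$. Writing $g=h-e$ (so $\int g=0$) and expanding
\[
t(H_2,e+g)=\sum_{S\subseteq E(H_2)}e^{|E(H_2)|-|S|}\int_{[0,1]^{|V(H_2)|}}\prod_{\{i,j\}\in S}g(x_i,x_j)\,dx,
\]
the terms with $|S|\le 1$ sum to $\tau_0$, while each term with $|S|\ge 2$ is, in absolute value, at most $e^{|E(H_2)|-|S|}$ times the homomorphism density of $|g|$ over the subgraph with edge set $S$; that density is $\le\|g\|_{L^2}^2$ in every case (if $S$ contains two adjacent edges it is $\le t(P_3,|g|)=\int_0^1\bigl(\int_0^1|g|(x,y)\,dy\bigr)^2dx\le\|g\|_{L^2}^2$; if $S$ is a matching it is $\|g\|_{L^1}^{|S|}\le\|g\|_{L^1}^2\le\|g\|_{L^2}^2$, using $\|g\|_{L^1}\le 1$). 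Hence $|t(H_2,\tilde h)-\tau_0|\le D\|h-e\|_{L^2}^2$ for an explicit constant $D=D(H_2,e)\ge 1$, so $\tau_0-t(H_2,\tilde h)\le D\bigl(-I(e)-(-I(\tilde h))\bigr)$ whenever $t(H_2,\tilde h)\le\tau_0$. Since every maximizer has $t(H_2,\tilde h)\le\tau_0$ for $\beta_2\le 0$ (otherwise $h\equiv e$ would be strictly better), it follows that for $\beta_2\in(-1/D,0)$ the unique maximizer is $h\equiv e$ and $\psi^{e,\beta_2}=\beta_2\tau_0-I(e)$, an \emph{affine} function of $\beta_2$ with slope $\tau_0$.

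For (ii), set $\tau_{\min}(e):=\min\{t(H_2,\tilde h):e(\tilde h)=e\}$, attained by compactness of $\tilde{\mathcal W}$. Arguing as in the proof of Theorem~\ref{csep} and using $-I(\tilde h)\in[0,\tfrac12\log 2]$ together with $t(H_2,\tilde h_{\beta_2})\ge\tau_{\min}(e)$ for a maximizer $\tilde h_{\beta_2}$, one gets $\beta_2\tau_{\min}(e)\le\psi^{e,\beta_2}\le\beta_2\tau_{\min}(e)+\tfrac12\log 2$, hence $\psi^{e,\beta_2}/\beta_2\to\tau_{\min}(e)$ as $\beta_2\to-\infty$. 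Because $\chi(H_2)\ge 3$, $H_2$ has no homomorphism into $K_{\chi(H_2)-1}$, so a suitably diluted complete $(\chi(H_2)-1)$-partite graphon realizes edge density $e$ with $t(H_2,\cdot)=0$ for every $e\le\frac{\chi(H_2)-2}{\chi(H_2)-1}$, and $\tau_{\min}(e)<\tau_0$ for \emph{all} $e\in(0,1)$ — for $e$ beyond $\frac{\chi(H_2)-2}{\chi(H_2)-1}$ one perturbs a near-multipartite graphon, or cites the extremal graphons underlying Figure~\ref{razfig}, just as \cite{RS1,Raz} do for the triangle. Now suppose $\psi^{e,\beta_2}$ were real-analytic on $(-\infty,0)$: by (i) it agrees with $\beta_2\mapsto\beta_2\tau_0-I(e)$ on $(-1/D,0)$, hence on all of $(-\infty,0)$ by analytic continuation, so $\psi^{e,\beta_2}/\beta_2\to\tau_0$, contradicting $\tau_{\min}(e)<\tau_0$. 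Therefore $\psi^{e,\beta_2}$ loses analyticity at some $\beta_2\in(-\infty,0)$.

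The step I expect to be the main obstacle is the inequality $\tau_{\min}(e)<e^{|E(H_2)|}$ for $e$ close to $1$ (above $\frac{\chi(H_2)-2}{\chi(H_2)-1}$): that low edge densities can be realized $H_2$-free is immediate, but near edge density $1$ ruling out the constant graphon as a minimizer of $t(H_2,\cdot)$ at fixed edge density needs a genuine extremal-graph-theoretic input of Kruskal--Katona/Razborov type, which plays exactly the role of the cited results of \cite{Raz,RS1} in the triangle case. Once that is in hand the rest is a routine adaptation of the proof of Theorem~\ref{csep}.
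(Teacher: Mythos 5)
Your overall route is the same as the paper's: the paper's proof of this theorem is a one-sentence sketch saying that the proof of Theorem \ref{csep} carries over once one supplies (a) the Erd\H{o}s--R\'{e}nyi maximizer at $\beta_2=0$, (b) the non-differentiability of $s(e,t)$ at the Erd\H{o}s--R\'{e}nyi curve for general $H_2$ (cited from Radin--Sadun), and (c) the emergent multipartite structure of maximizers as $\beta_2\to-\infty$ (cited from Chatterjee--Diaconis and Yin--Rinaldo--Fadnavis). Your step (i) is a genuine improvement on (b): the Taylor bound $I(\tilde h)-I(e)\ge\|h-e\|_{L^2}^2$ together with the expansion of $t(H_2,e+g)$ and the estimate $|t(H_2,\tilde h)-\tau_0|\le D\|h-e\|_{L^2}^2$ is correct (the case split matching/adjacent-pair is exhaustive), and it yields, self-containedly, that $h\equiv e$ is the unique maximizer for $\beta_2\in(-1/D,0)$, so $\psi^{e,\beta_2}=\beta_2\tau_0-I(e)$ there. (Incidentally this is the correct statement: $\psi^{e,\beta_2}$ is affine with slope $\tau_0$ near $0$, not constant as the paper's proof of Theorem \ref{csep} asserts; your version of the contradiction, via the asymptotic slope $\psi^{e,\beta_2}/\beta_2\to\tau_{\min}(e)$, handles this cleanly.)

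The gap is exactly where you flag it, and it is not a removable technicality: your argument needs $\tau_{\min}(e)<e^{|E(H_2)|}$ for the given $e$, and indeed any argument of this shape needs it, because if $\tau_{\min}(e)=\tau_0$ then $\beta_2 t(H_2,\tilde h)-I(\tilde h)\le\beta_2\tau_0-I(e)$ for all admissible $\tilde h$ and all $\beta_2<0$, so $\psi^{e,\beta_2}$ would be affine, hence analytic, on all of $(-\infty,0)$ -- the inequality is the crux of the theorem, not a side condition. For $e\le\frac{\chi(H_2)-2}{\chi(H_2)-1}$ your multipartite construction settles it, but for larger $e$ neither of your suggested fixes works as stated: Razborov's result and Figure \ref{razfig} are specific to triangles, and perturbing the constant (or a near-multipartite) graphon runs into the obstruction that the second variation of $t(H_2,\cdot)$ at $h\equiv e$ in mean-zero directions equals $e^{|E(H_2)|-2}$ times a sum over adjacent edge pairs of $\int_0^1\bigl(\int_0^1 g(x,y)\,dx\bigr)^2dy\ge0$, so the constant graphon is a local minimizer to second order; and when $H_2$ contains even cycles the natural bipodal/rank-one perturbations produce positive leading-order terms, so one cannot simply exploit an odd cycle of $H_2$ at third order either. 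This is precisely the point at which the paper leans on the cited structural results for general $H_2$ rather than proving anything; to complete your write-up you must either import such a result (a statement that, for every $e\in(0,1)$ and every $H_2$ with $\chi(H_2)\ge3$, the minimizer of $t(H_2,\cdot)$ at edge density $e$ is non-constant, with $\tau_{\min}(e)$ strictly below $e^{|E(H_2)|}$) or prove it, and the latter is a genuine extremal problem, not a routine adaptation of Theorem \ref{csep}.
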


\begin{proof}
The proof of Theorem \ref{csep} carries over almost word-for-word
when we incorporate the disordered Erd\H{o}s-R\'{e}nyi structure
of the maximizing graphon at $\beta_2=0$, the
non-differentiability of $s(e, t)$ for a general $H_2$ \cite{RS},
and the emergent multipartite structure of the maximizing graphon
as $\beta_2 \rightarrow -\infty$ \cite{CD} \cite{YRF}.
\end{proof}

Now that we know about the occurrence of a phase transition in the
constrained repulsive exponential model, we probe deeper into this
phenomenon and ask: how smooth is this transition? Theorem
\ref{etmain} shows what happens when the ideal edge density of the
edge-triangle model is fixed at $1/2$ while the influence of the
triangle densities is tuned through the parameter $\beta_2$.

\begin{theorem}
\label{etmain} Consider the constrained repulsive edge-triangle
exponential random graph model as described at the beginning of
Section \ref{3}. Fix $e=1/2$. Let $\beta_2$ vary from $0$ to
$-\infty$. Then $\psi^{\frac{1}{2},\beta_2}$ is analytic everywhere except
at a certain point $\beta_2^c$, where the derivative
$\frac{\partial}{\partial \beta_2}\psi^{\frac{1}{2},\beta_2}$ displays jump
discontinuity.
\end{theorem}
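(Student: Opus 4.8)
The plan is to pin down the variational formula $\psi^{1/2,\beta_2}=\sup_t(\beta_2 t+s(1/2,t))$ from (\ref{etsetmax}) and study its dependence on $\beta_2$ as a Legendre-type transform of the concave envelope of $s(1/2,\cdot)$. The key analytic inputs are the results of Radin and Sadun \cite{RS1,RS} on the function $s(e,t)$ along the slice $e=1/2$: I would use that for $t\le e^3=1/8$ one has the explicit lower-boundary behavior, with $s(1/2,0)=-I(1)/2=0$ attained only at the bipartite graphon (\ref{h}), that $s(1/2,1/8)=-I(1/2)$ attained only at the constant graphon $h\equiv 1/2$, and the quantitative bound (\ref{2/3}), $s(1/2,1/8)-s(1/2,t)\ge c(1/8-t)^{2/3}$. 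Since any maximizer for $\beta_2\le0$ has $t\le e^3$, only the portion $t\in[0,1/8]$ of the domain is relevant.

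First I would record, as in the proof of Theorem \ref{csep}, the two endpoint values: $\psi^{1/2,0}=-I(1/2)$, achieved at the constant graphon, and $\lim_{\beta_2\to-\infty}\psi^{1/2,\beta_2}=-I(1)/2=0$, achieved at the bipartite graphon. Next I would examine the concave (upper) envelope $\hat s$ of $t\mapsto s(1/2,t)$ on $[0,1/8]$: by general convex-duality, $\psi^{1/2,\beta_2}=\sup_{t}(\beta_2 t+s(1/2,t))=\sup_t(\beta_2 t+\hat s(t))$, and $\psi^{1/2,\cdot}$ is analytic at $\beta_2$ precisely where the supporting line of slope $-\beta_2$ touches $\hat s$ at a unique point $t^*(\beta_2)$ at which $\hat s$ is differentiable and locally strictly concave (so that $t^*$ moves analytically); it fails to be differentiable in $\beta_2$ exactly where the optimal $t$ jumps, i.e. where a supporting line touches $\hat s$ at two distinct points — a ``tie'' between the two competing structures. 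The bound (\ref{2/3}) shows $\hat s(t)<s(1/2,1/8)$ only for $t$ near $1/8$ is ruled out — more precisely (\ref{2/3}) guarantees that near $t=1/8$ the graph of $s(1/2,\cdot)$ drops below any line through $(1/8,s(1/2,1/8))$ of finite slope, so for $\beta_2$ slightly negative the maximizer stays at $t=1/8$ and $\psi^{1/2,\beta_2}\equiv-I(1/2)$ on an interval $(-c,0]$; on the other hand, for $\beta_2$ very negative the maximizer is near $t=0$. Hence there is a well-defined threshold $\beta_2^c=\inf\{\beta_2:\ \text{the maximizing }t\text{ equals }0\text{, or more precisely, differs from }1/8\}$, and the jump in $t^*$ across $\beta_2^c$ forces a jump discontinuity in $\frac{\partial}{\partial\beta_2}\psi^{1/2,\beta_2}$ via the envelope theorem, $\frac{\partial}{\partial\beta_2}\psi^{1/2,\beta_2}=t^*(\beta_2)$.

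The main obstacle is establishing that there is \emph{exactly one} non-analytic point, rather than merely at least one. This requires genuine control of the shape of $s(1/2,t)$ on $[0,1/8]$: one must show that, apart from the single first-order transition, $\hat s$ coincides with $s(1/2,\cdot)$ and is smooth and strictly concave, so that $t^*(\beta_2)$ is analytic and strictly monotone on each side of $\beta_2^c$ and the supporting line is unique away from $\beta_2^c$. I would extract this from the Radin–Sadun description of the minimizing and near-minimizing graphons along $e=1/2$ (the ``razor'' boundary in Figure \ref{razfig}): on the slice $e=1/2$ the relevant piece of the boundary curve is a single concave algebraic arc, the maximizing graphons interpolate in an explicit bipartite-with-deleted-edges family, and $s(1/2,t)$ is real-analytic and strictly concave in $t$ on $(0,1/8)$ with the $(1/8-t)^{2/3}$ singularity only as $t\uparrow 1/8$ pasting onto the flat value coming from $h\equiv1/2$. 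Granting that, convex duality gives analyticity of $\psi^{1/2,\beta_2}$ on $(-\infty,\beta_2^c)$ and on $(\beta_2^c,0)$, a unique $\beta_2^c<0$ at which the optimal $t$ jumps from the $t=1/8$ branch to the interior branch, and a corresponding jump in $\partial_{\beta_2}\psi^{1/2,\beta_2}$, completing the proof. A secondary technical point, handled exactly as in Theorems \ref{main1}–\ref{main2}, is that the supremum over graphons with prescribed $(e,t)$ is attained and that $s(e,\cdot)$ is upper semi-continuous, so that the $\sup_t$ is a genuine maximum and the envelope theorem applies.
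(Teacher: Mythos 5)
Your overall strategy is the same as the paper's: view (\ref{12setmax}) as a Legendre-type transform of $s(1/2,\cdot)$ on $[0,1/8]$, locate $\beta_2^c$ as the slope at which the supporting line touches the graph at a second point, and obtain the jump of $\frac{\partial}{\partial\beta_2}\psi^{1/2,\beta_2}=t^*(\beta_2)$ from the jump of the optimizer. You also correctly identify that everything hinges on the precise shape of $s(1/2,\cdot)$. But the shape you then assert --- that $s(1/2,t)$ is real-analytic and \emph{strictly concave} on all of $(0,1/8)$, with the $(1/8-t)^{2/3}$ behavior only a boundary singularity --- is false, and it is inconsistent with the bound (\ref{2/3}) that you yourself invoke: (\ref{2/3}) forces the chord slopes $\bigl(s(1/2,1/8)-s(1/2,t)\bigr)/(1/8-t)\geq c(1/8-t)^{-1/3}$ to blow up as $t\uparrow 1/8$, whereas for a concave function these chord slopes are non-increasing in $t$ and hence bounded. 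So $s(1/2,\cdot)$ is necessarily \emph{convex} in a left neighborhood of $t=1/8$; this convex shoulder is exactly the paper's point (Figure \ref{nonconvex}) and is the mechanism of the first-order transition. Worse, if your concavity claim were true, the concave envelope $\hat s$ would coincide with $s$ on all of $[0,1/8]$, the optimizer $t^*(\beta_2)$ would move continuously, and there would be \emph{no} jump in the derivative --- the lemma you propose to extract would contradict the theorem you are proving.

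The missing ingredient, which the paper takes from Radin and Sadun \cite{RS}, is the explicit formula on this slice: the unique (up to equivalence) maximizer of $-I$ with $e=1/2$, $t\le 1/8$ is the symmetric bipodal graphon (\ref{bipodal}) with $\eps=(\frac18-t)^{1/3}$, so that $s(1/2,t)=-\frac12\bigl[I(\tfrac12+\eps)+I(\tfrac12-\eps)\bigr]$. This function is analytic on $(0,1/8)$ but convex near $t=1/8$ and concave only below an inflection point; its concave envelope therefore contains a genuine line segment joining $(t_c,s(1/2,t_c))$ to $(1/8,\tfrac{\log 2}{2})$. That segment is what produces a two-point support line at the single slope $-\beta_2^c$, analyticity of $\psi^{1/2,\beta_2}$ on $(\beta_2^c,0)$ --- where it equals the linear function $\beta_2/8+\tfrac{\log 2}{2}$, not a constant as you wrote, so its derivative there is $1/8$ --- and on $(-\infty,\beta_2^c)$ (via strict concavity and analyticity of $s$ below $t_c$ together with the implicit function theorem), and the jump of the derivative from $1/8$ down to $t_c$ at $\beta_2^c$. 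With this corrected shape input your envelope/duality argument goes through and essentially coincides with the paper's proof.
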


\begin{figure}[htbp]
\center\includegraphics[width=3in]{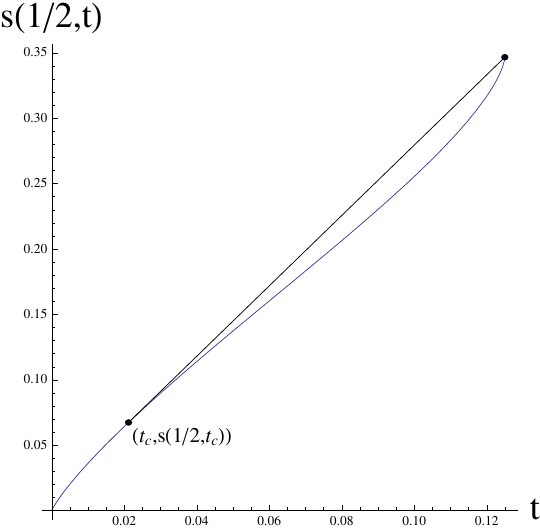}
\caption{\label{nonconvex}The graph of $s(1/2,t)$ below the ER
line for the edge-triangle model (blue). The convex hull of the
region below the graph is delimited by the black line segment and
the portion of the graph to its left; this segment is the support
line at the right endpoint $(t=1/8,s=\frac{\log 2}{2})$ of maximal
slope $-\beta_2^c$. The other point at which the line segment
meets the curve is the point $(t_c,s(1/2,t_c))$.}
\end{figure}

\begin{proof}
Setting $e=1/2$ in (\ref{etsetmax}) gives
\begin{equation}
\label{12setmax} \psi^{\frac{1}{2},\beta_2}=\sup_t (\beta_2
t+s(\frac{1}{2},t)).
\end{equation}
Since $\beta_2 \leq 0$, by the convexity of $I$, any maximizing
graphon $h$ for (\ref{12setmax}) must satisfy $t(\tilde{h}) \leq
1/8$, i.e., it must lie below the Erd\H{o}s-R\'{e}nyi curve
$t=e^3$. Radin and Sadun \cite{RS} showed that on the line segment
$e=\frac{1}{2}$ and $t\leq e^3$, the symmetric bipodal graphon
\begin{equation}
\label{bipodal}
h(x, y)=\left\{%
\begin{array}{ll}
    \frac{1}{2}+\epsilon, & \hbox{if $x<\frac{1}{2}<y$ or $x>\frac{1}{2}>y$;} \\
    \frac{1}{2}-\epsilon, & \hbox{if $x, y<\frac{1}{2}$ or $x, y>\frac{1}{2}$,} \\
\end{array}%
\right.
\end{equation}
where $0\leq \epsilon=(\frac{1}{8}-t)^{\frac{1}{3}}\leq
\frac{1}{2}$, maximizes $s(\frac{1}{2},t)$, and that every
maximizing graphon is of the form $h_{\sigma}$ for some measure
preserving bijection $\sigma$. Equivalently, the maximum value for
(\ref{12setmax}) is achieved only at the reduced bipodal graphon
$\tilde{h}$. See Figure \ref{nonconvex} for the graph of
$s(1/2,t)$.

\begin{figure}[htbp]
\center\includegraphics[width=3in]{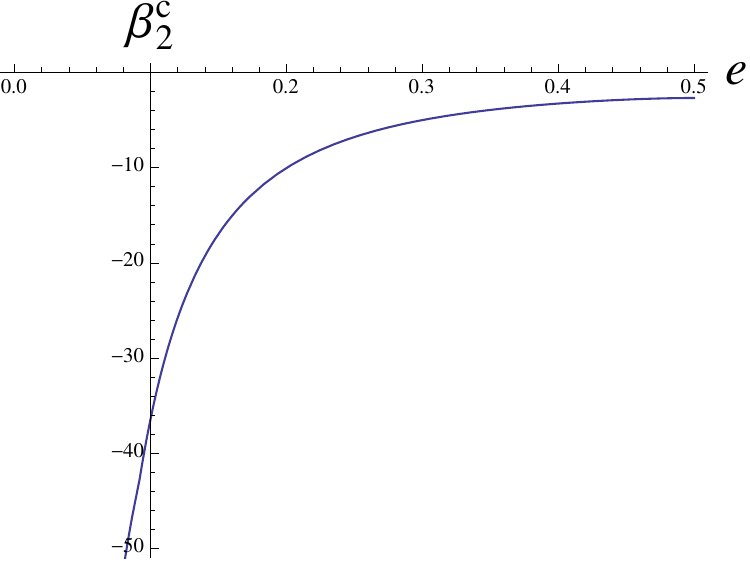}
\caption{\label{belowER1} The (conjectural) graph of $\beta_2^c$
as a function of $e$ for the edge-triangle model, in the range
$e\le 1/2$. The computation is based on the conjecture that the
maximizing graphons in this region are symmetric and bipodal, see
\cite{RRS}.}
\end{figure}

Geometrically, the maximization problem in (\ref{12setmax})
involves finding the lowest half-plane with bounding line of slope
$-\beta_2$ lying above the graph of $s(1/2,t)$. For
$\beta_2>\beta_2^c$ the boundary of this half-plane passes only
through the graph of $s(1/2,t)$ at the right endpoint
$(\frac{1}{8},\frac{\log 2}{2})$. The critical value $\beta_2^c$
is defined (as in Figure \ref{nonconvex}) as the first slope at
which this half-plane intersects the curve at a different point.
We let $(t_c,s(1/2,t_c))$ be this second point. At more negative
values of $\beta_2$, the half-plane will hit the curve at points
with $t$ values below $t_c$.

In particular this shows the non-analyticity of
$\psi^{\frac{1}{2}, \beta_2}$ as a function of $\beta_2$ at
$\beta_2=\beta_2^c$. The analyticity of $\psi^{\frac{1}{2},
\beta_2}$ elsewhere follows from concavity (and analyticity) of
$s(1/2,t)$ below $t_c$. By Theorem \ref{main2}, at
$\beta_2=\beta_2^c$, the maximizing reduced graphon $\tilde{h}$
for (\ref{12setmax}) transitions from being Erd\H{o}s-R\'{e}nyi
with edge formation probability $\frac{1}{2}$ to symmetric bipodal
with $\eps_c=(\frac18-t_c)^{1/3}$. The jump discontinuity in the
derivative follows when we realize that $\frac{\partial}{\partial
\beta_2}\psi^{\frac{1}{2},\beta_2}=t(\tilde{h})$.
\end{proof}

Numerical computations yield that $\beta_2^c$ is approximately
$-2.7$ and $\eps_c$ is approximately $0.47$. By Theorem
\ref{main2}, this shows that as $\beta_2$ decreases from $0$ to
$-\infty$, a typical graph $G_n$ drawn from the constrained
repulsive edge-triangle model jumps from being Erd\H{o}s-R\'{e}nyi
to almost complete bipartite, skipping a large portion of the
$e=\frac{1}{2}$ line. This ``jump behavior'' (also called
first-order phase transition) is intrinsically tied to the
\emph{convexity} of $s(e, t)$ just below the Erd\H{o}s-R\'{e}nyi
curve $t=e^3$, thus we expect similar phase transition phenomena
for general $e \neq \frac{1}{2}$ as well; see Figures
\ref{belowER1}, \ref{belowER2}. However, unlike in the $e=\frac{1}{2}$ case, where the symmetry of $I$ (\ref{I1}) about $u=\frac{1}{2}$ contributes to a precise knowledge of the structure of the maximizing graphon, in general cases, there is only empirical evidence concerning the structure of the maximizing graphons. See also \cite{CD} for related results
in the unconstrained repulsive edge-triangle model.

\begin{figure}[htbp]
\center\includegraphics[width=3in]{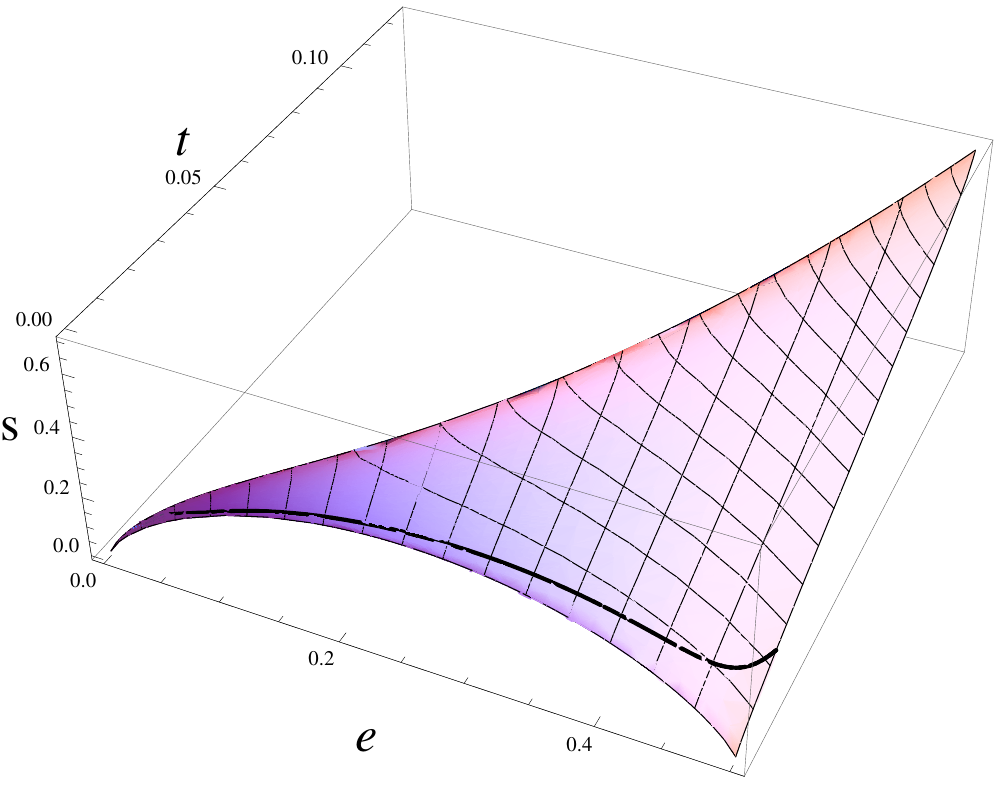}
\caption{\label{belowER2} The (conjectural) graph of entropy $-I$
as a function of edge and triangle densities $e,t$ in the region
$e\le\frac12, t\le e^3$. The critical curve (black) defines $t^c$
as a function of $e$. The computation is based on the conjecture
that the maximizing graphons in this region are symmetric and
bipodal, see \cite{RRS}.}
\end{figure}

\section{Euler-Lagrange equations}
\label{4} We return to the constrained $2$-parameter family of
exponential random graphs (\ref{2pmf}). For notational convenience
and with some abuse of notation, denote by $T(h)=\sum_{i=1}^2
\beta_i t(H_i, h)$. As seen in Section \ref{3}, the ``micro
analysis'' helps with the ``macro analysis''. Explicitly, if we
can find the maximizing graphon for $-I$ subject to two
constraints $t(H_1, \cdot)=t_1$ and $t(H_2, \cdot)=t_2$, where
$t_1$ and $t_2$ are arbitrary but fixed homomorphism densities,
then we can find the maximizing graphon for $T-I$ subject to fewer
or even no constraints. This in turn will aid us in understanding
the limiting conditional probability distribution and the
structure of a typical graph $G_n$ sampled from either the
constrained or the unconstrained exponential model. In the
unconstrained case, Chatterjee and Diaconis derived the
Euler-Lagrange equation for the maximizing graphon $h$ for
$T(h)-I(h)$ when the tuning parameters are arbitrary but fixed
(Theorem 6.1 in \cite{CD}). When applied to the $2$-parameter
model, they showed that $h$ must be bounded away from $0$ and $1$ and for almost all $(x,y)\in [0,1]^2$,
\begin{equation}
\label{lagrange} h(x,y)=\frac{e^{2\sum_{i=1}^2\beta_i
\Delta_{H_i}h(x,y)}}{1+e^{2\sum_{i=1}^2\beta_i
\Delta_{H_i}h(x,y)}},
\end{equation}
where for a finite simple graph $H$ with vertex set $V(H)$ and
edge set $E(H)$,
\begin{equation}
\Delta_H h(x,y)=\sum_{(r,s)\in E(H)}\Delta_{H, r, s}h(x, y),
\end{equation}
and for each $(r,s) \in E(H)$ and each pair of points $x_r,x_s \in
[0, 1]$,
\begin{multline}
\Delta_{H, r, s}h(x_r,
x_s)=\\\int_{[0,1]^|V(H)\texttt{\char92}\{r,s\}|}\prod_{(r',s')\in
E(H):(r',s')\neq (r,s)}h(x_{r'},x_{s'})\prod_{v\in V(H):v\neq
r,s}dx_v.
\end{multline}
For example, in the edge-triangle model where $H_1$ is an edge and
$H_2$ is a triangle, $\Delta_{H_1}h(x,y)\equiv 1$ and
$\Delta_{H_2}h(x,y)=3\int_0^1 h(x,z)h(y,z)dz$. In the constrained
case, we could likewise derive the Euler-Lagrange equation by
resorting to the method of Lagrange multipliers, which will turn
the constrained maximization into an unconstrained one, but we
provide an alternative bare-hands approach here. The following
theorem may also be formulated in terms of reduced graphons.

\begin{theorem}
Consider the constrained $2$-parameter exponential random graph
model (\ref{2pmf}). Let $t_1$ and $t_2$ be arbitrary but fixed
homomorphism densities. Suppose the graphon $h$ maximizes $-I(h)$
subject to $t(H_1,h)=t_1$ and $t(H_2,h)=t_2$. If $h$ is bounded
away from $0$ and $1$, then there must exist constants $\beta_1$
and $\beta_2$ such that $h$ satisfies (\ref{lagrange}) for almost
all $(x,y)\in [0,1]^2$.
\end{theorem}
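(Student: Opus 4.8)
The plan is to replace the method of Lagrange multipliers by a direct (``bare-hands'') perturbation argument combined with a finite-dimensional reduction. Write $\delta\le h\le 1-\delta$ for the two-sided bound on $h$, and let $\langle\cdot,\cdot\rangle$ denote the $L^2([0,1]^2)$ inner product. For any bounded symmetric measurable $\gamma\colon[0,1]^2\to\mathbb{R}$, the function $h+s\gamma$ again lies in $\mathcal{W}$ once $|s|$ is small enough (depending on $\|\gamma\|_\infty$ and $\delta$). Since $t(H_i,h+s\gamma)$ is a polynomial in $s$, the map $s\mapsto t(H_i,h+s\gamma)$ is differentiable with
\[
\left.\frac{d}{ds}\right|_{s=0}t(H_i,h+s\gamma)=\langle\gamma,\Delta_{H_i}h\rangle ,
\]
as one checks by differentiating the product in (\ref{tt}) edge by edge and relabelling vertices (this also shows $\Delta_{H_i}h$ may be taken symmetric). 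Since $I'(u)=\frac12\log\frac{u}{1-u}$ is smooth and bounded on $[\delta,1-\delta]$, the map $s\mapsto I(h+s\gamma)$ is differentiable at $0$ with derivative $\langle\gamma,I'(h)\rangle$. The goal is to prove that $I'(h)=\beta_1\Delta_{H_1}h+\beta_2\Delta_{H_2}h$ almost everywhere for suitable constants $\beta_1,\beta_2$; since $I'(h)=\frac12\log\frac{h}{1-h}$, solving this identity pointwise for $h$ gives precisely (\ref{lagrange}).

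First I would treat the nondegenerate case. Consider the linear map $L\gamma=(\langle\gamma,\Delta_{H_1}h\rangle,\langle\gamma,\Delta_{H_2}h\rangle)\in\mathbb{R}^2$ on symmetric $\gamma\in L^\infty$; since $\Delta_{H_1}h\equiv1$, $L$ has rank at least $1$. Assume $L$ has rank $2$ and pick, once and for all, symmetric $\gamma_1,\gamma_2$ with $L\gamma_1=(1,0)$ and $L\gamma_2=(0,1)$. Given an arbitrary symmetric $\gamma_0\in L^\infty$, the map $\Phi(s_0,s_1,s_2)=\big(t(H_1,h+\textstyle\sum_k s_k\gamma_k)-t_1,\;t(H_2,h+\sum_k s_k\gamma_k)-t_2\big)$ is smooth near $0\in\mathbb{R}^3$, vanishes at $0$, and has $\partial_{(s_1,s_2)}\Phi(0)=\mathrm{Id}$; the implicit function theorem produces a smooth curve $s_0\mapsto(s_1(s_0),s_2(s_0))$ with $s_j(0)=0$ along which both constraints are preserved, and $h_{s_0}:=h+s_0\gamma_0+s_1(s_0)\gamma_1+s_2(s_0)\gamma_2$ is a genuine graphon for $|s_0|$ small. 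Maximality of $h$ makes $s_0=0$ an interior maximum of $s_0\mapsto -I(h_{s_0})$, so $\frac{d}{ds_0}I(h_{s_0})\big|_{s_0=0}=0$; differentiating $\Phi(s_0,s_1(s_0),s_2(s_0))\equiv0$ gives $s_j'(0)=-\langle\gamma_0,\Delta_{H_j}h\rangle$, and the chain rule then yields
\[
\langle\gamma_0,I'(h)\rangle=\beta_1\langle\gamma_0,\Delta_{H_1}h\rangle+\beta_2\langle\gamma_0,\Delta_{H_2}h\rangle,\qquad \beta_j:=\langle\gamma_j,I'(h)\rangle ,
\]
with $\beta_1,\beta_2$ independent of $\gamma_0$. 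As $\gamma_0$ ranges over all symmetric bounded functions and $f:=I'(h)-\beta_1\Delta_{H_1}h-\beta_2\Delta_{H_2}h$ is itself symmetric and bounded, taking $\gamma_0=f$ forces $f=0$ a.e.; rearranging $\frac12\log\frac{h}{1-h}=\beta_1\Delta_{H_1}h+\beta_2\Delta_{H_2}h$ gives (\ref{lagrange}).

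The step I expect to be the main obstacle is the degenerate case in which $L$ has rank $1$, equivalently $\Delta_{H_2}h$ equals a.e.\ a constant $c$ (as $\Delta_{H_1}h\equiv1$): there the first-order perturbations do not separate the two constraints and the reduction above breaks down, an ``abnormal'' multiplier being the danger. I would dispose of this by showing $\Delta_{H_2}h\equiv c$ forces $h$ to be a.e.\ constant: for $H_2$ a triangle, $3\int_0^1 h(x,z)h(y,z)\,dz\equiv c$ together with $\int h(x,z)h(y,z)\,dz\le\big(\int h(x,z)^2dz\big)^{1/2}\big(\int h(y,z)^2dz\big)^{1/2}$ forces equality in Cauchy--Schwarz for a.e.\ $(x,y)$, hence $h(x,\cdot)=h(y,\cdot)$ a.e., and symmetry then makes $h$ a.e.\ constant; an analogous argument handles a general connected $H_2$. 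When $h\equiv e$ is constant (note $e\ge\delta>0$), the quantities $I'(h)$, $\Delta_{H_1}h\equiv1$ and $\Delta_{H_2}h$ are all constants with $\Delta_{H_2}h\ne0$, so one picks $\beta_1,\beta_2$ with $\beta_1\Delta_{H_1}h+\beta_2\Delta_{H_2}h=I'(h)$ and verifies directly that this $h$ solves (\ref{lagrange}). The only non-soft input throughout is the identification of the two derivatives and the symmetry of $\Delta_{H_i}h$ via the edge-by-edge relabelling noted above.
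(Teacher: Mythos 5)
Your main (rank--$2$) argument is a rigorous rendering of the same first--variation idea the paper uses. The paper perturbs the value of $h$ at three points, demands that the $3\times 3$ matrix of differentials of $(t_1,t_2,-I)$ be singular for every choice of points, and reads off the relation (\ref{univ}); you replace the (formally meaningless) pointwise perturbation by directional derivatives along bounded symmetric $\gamma$, restore the two constraints with the implicit function theorem, and kill $f=I'(h)-\beta_1\Delta_{H_1}h-\beta_2\Delta_{H_2}h$ by testing against $\gamma_0=f$. This buys genuine rigor (the paper's appeal to almost-everywhere continuity and to perturbations at single points is heuristic), at the cost of having to treat the degenerate rank--$1$ case separately. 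Note that the determinant formulation does not really avoid that case either: the paper's passage from (\ref{univ}) to (\ref{lagrange}) tacitly requires the coefficient $\beta_2$ in (\ref{univ}) to be nonzero, i.e.\ precisely that $\Delta_{H_2}h$ not be a.e.\ constant. So you have correctly isolated the one genuinely delicate point.

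That degenerate case is, however, where your proposal has a real gap. The Cauchy--Schwarz bound $\int h(x,z)h(y,z)\,dz\le\bigl(\int h(x,z)^2dz\bigr)^{1/2}\bigl(\int h(y,z)^2dz\bigr)^{1/2}$ together with constancy of the left side does not ``force equality'': nothing prevents the right side from simply being larger, so no conclusion follows from that inequality alone. For the triangle the conclusion is nonetheless true, but by a different argument: constancy of $\int h(x,z)h(y,z)\,dz\equiv c/3$ says that the self-adjoint kernel operator $T_h$ satisfies $T_h^2=(c/3)\,\mathbf{1}\otimes\mathbf{1}$; since $\ker T_h=\ker T_h^2$, $T_h$ has rank one with range spanned by $\mathbf{1}$, and self-adjointness then gives $T_h=\mu\,\mathbf{1}\otimes\mathbf{1}$, i.e.\ $h\equiv\mu$ a.e.\ (with $\mu\ge\delta>0$). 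Your further claim that ``an analogous argument handles a general connected $H_2$'' is unproven and false as a standalone statement: for $H_2$ the path with two edges, $\Delta_{H_2}h(x,y)=d(x)+d(y)$ with $d$ the degree function, which is constant for every regular graphon, and non-constant regular graphons exist (e.g.\ $h(x,y)=\tfrac12+\tfrac14\cos\bigl(2\pi(x-y)\bigr)$); even under the paper's standing assumption $\chi(H_2)\ge 3$ the implication is not established. Since in the degenerate case the conclusion (\ref{lagrange}) itself forces $h$ to be constant, proving the theorem there amounts exactly to showing that a degenerate maximizer is constant, and for general $H_2$ this will require using the maximality of $h$ (or a separate argument), not merely the identity $\Delta_{H_2}h\equiv c$; for the edge--triangle case the operator-rank argument above closes your gap.
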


\begin{proof}
Graphons are bounded integrable functions on $[0,1]^2$ so they are continuous outside a set of arbitrarily small measure. Let $(x_i,y_i)$ for $i=1,2,3$ be
three points of $[0,1]^2$. Inside a very small ball near $(x_i,y_i)$, write $h=h_i+\bar{h}$ where $h_i$ is the average of $h$ in that ball. We
infinitesimally perturb the values of $h$ around $(x_i,y_i)$, sending
$h_i\to h_i+dh_i$. Since $\bar{h}$ averages to $0$ and is pointwise small, in computing $t_1,t_2,$ and $-I$,
terms involving $\bar{h}$ only contribute to second order and may be ignored in the computation below. Then $(t_1, t_2, -I) \rightarrow (t_1,t_2,
-I)+(dt_1,dt_2,-dI)$ where
\begin{equation}
\left(%
\begin{array}{c}
  dt_1 \\
  dt_2 \\
  -dI   \\
\end{array}%
\right)=\left(%
\begin{array}{cccc}
  \Delta_{H_1}h_1 & \Delta_{H_1}h_2 & \Delta_{H_1}h_3 \\
  \Delta_{H_2}h_1 & \Delta_{H_2}h_2 & \Delta_{H_2}h_3 \\
  \frac{1}{2}\log(\frac{1}{h_1}-1) & \frac{1}{2}\log(\frac{1}{h_2}-1) & \frac{1}{2}\log(\frac{1}{h_3}-1) \\
\end{array}%
\right)\left(%
\begin{array}{c}
  dh_1 \\
  dh_2 \\
  dh_3 \\
\end{array}%
\right).
\end{equation}
If the determinant of the above matrix is nonzero, then there is a
nontrivial deformation $(dh_1,dh_2,dh_3)$ which increases $-I$
while leaving $t_1$ and $t_2$ fixed. So the maximizing graphon $h$
must satisfy the condition that the determinant is zero. Recall
that $H_1$ is a single edge and $\Delta_{H_1}h_i \equiv 1$. Without loss of generality we assume that $h_1 \neq h_2$, since otherwise $h$
is a constant graphon and our claim trivially follows. Thus
the first and third rows of the matrix are linearly independent
and there must exist constants $\beta_1$ and $\beta_2$ such that
\begin{equation}
\label{univ}
\Delta_{H_2}h_i=\beta_1+\frac{\beta_2}{2}\log(\frac{1}{h_i}-1).
\end{equation}
Moreover, since $\beta_1$ and $\beta_2$ are determined by $h_1$
and $h_2$, we must have (\ref{univ}) for all points $(x_3,y_3)\in
[0,1]^2$. We recognize this requirement is equivalent to
(\ref{lagrange}).
\end{proof}

Suppose we are looking for a graphon $h$ that maximizes $-I(h)$
subject to $t(H_1,h)=t_1$ only. Then following the same
``perturbation'' idea, we should examine
\begin{equation}
\left(%
\begin{array}{c}
  dt_1 \\
  -dI   \\
\end{array}%
\right)=\left(%
\begin{array}{cccc}
  \Delta_{H_1}h_1 & \Delta_{H_1}h_2 \\
  \frac{1}{2}\log(\frac{1}{h_1}-1) & \frac{1}{2}\log(\frac{1}{h_2}-1) \\
\end{array}%
\right)\left(%
\begin{array}{c}
  dh_1 \\
  dh_2 \\
\end{array}%
\right).
\end{equation}
Since the determinant is zero, $h$ must be a constant. This is the
same conclusion obtained by applying Jensen's inequality to the
convex function $I$. On the other hand, we may also consider
maximizing $-I(h)$ subject to $k$ (instead of $2$) constraints
$t(H_i,h)=t_i$ for $i=1,\ldots,k$, in which case we would perturb
the values of the graphon at $k+1$ points and form a $(k+1)\times
(k+1)$ matrix.

\medskip

\acks

Richard Kenyon's research was partially supported by NSF grant DMS-1208191 and a Simons Investigator award.
Mei Yin's research was partially supported by NSF grant
DMS-1308333. They thank Charles Radin, Kui Ren, and Lorenzo Sadun for helpful
conversations.

\end{document}